\documentclass[12pt,reqno]{amsart}
\usepackage[notref, notcite]{}
\usepackage{amscd} 
\usepackage[hyphens]{xurl}
\usepackage{mathrsfs}
\usepackage{amsthm}
\usepackage{amsmath}
\usepackage{amssymb}
\usepackage{amsfonts}
\usepackage{pb-diagram} 
\usepackage[all]{xy}
\usepackage{graphicx}
\usepackage{latexsym}
\usepackage{comment}
\usepackage{ascmac}
\usepackage{caption}
\usepackage{mathtools}
\usepackage{color}
\numberwithin{equation}{section}

\makeatletter

    \newcommand\contFrac{\@ifstar{\@contFracStar}{\@contFracNoStar}}

   \def\singleContFrac#1#2{%
        \begin{array}{@{}c@{}}%
            \multicolumn{1}{c|}{#1}%
            \\%
            \hline%
           \multicolumn{1}{|c}{#2}%
        \end{array}%
   }

    \def\@contFracNoStar#1{%
        \mathchoice{
            \@contFracNoStarDisplay@#1//\@nil%
        }{
            \@contFracNoStarInline@#1//\@nil%
        }{
            \@contFracNoStarInline@#1//\@nil%
        }{
            \@contFracNoStarInline@#1//\@nil%
        }%
    }

    \def\@contFracNoStarDisplay@#1//#2\@nil{%
        \@ifmtarg{#2}{%
            #1%
        }{%
            #1+\cfrac{1}{\@contFracNoStarDisplay@#2\@nil}%
        }%
    }

        \def\@contFracNoStarInline@#1//#2\@nil{%
            \@ifmtarg{#2}{%
                #1%
            }{%
                #1 \@@contFracNoStarInline@@#2\@nil%
            }%
        }
        \def\@@contFracNoStarInline@@#1//#2\@nil{%
            \@ifmtarg{#2}{%
                + \singleContFrac{1}{#1}%
            }{%
                + \singleContFrac{1}{#1} \@@contFracNoStarInline@@#2\@nil%
            }%
        }

    \def\@contFracStar#1{%
        \mathchoice{
            \@contFracStarDisplay@#1////\@nil%
        }{
            \@contFracStarInline@#1//\@nil%
        }{
            \@contFracStarInline@#1//\@nil%
        }{
            \@contFracStarInline@#1//\@nil%
        }%
    }

    \def\@contFracStarDisplay@#1//#2//#3\@nil{%
        \@ifmtarg{#2}{%
            #1%
        }{%
            #1 + \cfrac{#2}{\@contFracStarDisplay@#3\@nil}%
        }%
    }

        \def\@contFracStarInline@#1//#2\@nil{%
            \@ifmtarg{#2}{%
                #1%
            }{%
                #1 \@@contFracStarInline@@#2\@nil%
            }%
        }
        \def\@@contFracStarInline@@#1//#2//#3\@nil{%
            \@ifmtarg{#3}{%
                + \singleContFrac{#1}{#2}%
            }{%
                + \singleContFrac{#1}{#2} \@@contFracStarInline@@#3\@nil%
            }%
        }

\theoremstyle{plain}
\newtheorem*{syuA}{Main Theorem A}
\newtheorem*{syuB}{Main Theorem B}
\newtheorem{thm}{Theorem}[section]
\newtheorem{lem}[thm]{Lemma}
\newtheorem{cor}[thm]{Corollary}

\newtheorem{pro}[thm]{Proposition}
\theoremstyle{definition}
\newtheorem{df}[thm]{Definition}

\newtheorem{rem}[thm]{Remark}

\newtheorem*{prf*}{Proof}

\newtheorem*{pf*}{}
\newtheorem*{lem*}{LemmaA}
\newtheorem*{lm*}{LemmaB}
\newtheorem*{stra*}{Strategy for the proof of main result A}

\DeclareMathOperator*{\ZZ}{\mathbb{Z}}
\DeclareMathOperator*{\RR}{\mathbb{R}}
\makeatletter
\@namedef{subjclassname@2020}{%
  \textup{2020} Mathematics Subject Classification}
\makeatother
\title[Topology of Slicing ST]{
Topology of slices through \\the Sierpi\'nski tetrahedron}

\author{Yuto Nakajima and Takayuki Watanabe
}

\date{}

\address{Faculty of Science and Engineering, Doshisha University, 1-3 Tatara Miyakodani, Kyotanabe-shi, Kyoto, 610-0394, Japan}
\email{yunakaji@mail.doshisha.ac.jp}
\address{Chubu University Academy of Emerging Sciences,
1200 Matsumoto-cho, Kasugai-shi, Aichi, 487-8501, Japan.} 
\email{takawatanabe@fsc.chubu.ac.jp}

\begin{document}

\begin{abstract}
We investigate slices of the Sierpi\'nski tetrahedron from a topological viewpoint.
For each $c\in[0,1]$, we study the \v{C}ech (co)homology group of the slice at height $c$.
We show that the topology of the slice exhibits a sharp dichotomy.
If $c$ is a dyadic rational, then the slice has finitely many connected components, infinite first \v{C}ech homology, and trivial higher homology.
If $c$ is not a dyadic rational, then the slice is totally disconnected and all positive-degree \v{C}ech homology groups vanish.

\end{abstract}

\maketitle

\section{Introduction}
\subsection{Background}
In the study of slicing problems for fractals, a central objective is to determine the dimension of the slices. General results of Marstrand-type slicing theorems can be found in \cite{Mar, Mat, Mat2}. Results concerning slices of sets generated by Moran constructions in cubic settings are presented in \cite{BFS, BP, Nak22, WWX, WX, WXX}. For slicing problems arising from intersections of two Cantor sets, see \cite{LX, Sh, Wu}. 

Our main aim in this paper is to investigate the topological properties of slices of fractals.
For limit sets of iterated function systems (IFSs), the celebrated work of Hata \cite{Hata} provided a criterion for determining whether the limit set is connected (see also \cite{Nak24, W24} for related results on the connectivity of fractals).
To capture more refined topological structures of limit sets, we consider \v{C}ech homology groups, whose definition will be recalled in the next section. In particular, the zeroth \v{C}ech homology group characterizes the connectivity of a given topological space.
In this direction, a pioneering work was carried out by Sumi \cite{Sumi09}. Inspired by his work, the authors \cite{NW} established a homological framework for a broad class of fractals and calculated the rank of \v{C}ech homology groups for sets arising from Moran constructions in cubic settings.

In this paper, we focus on   slices of the so-called Sierpi\'nski tetrahedron, which serves as a representative example of fractals. The Sierpi\'nski tetrahedron is generated by an IFS consisting of similitudes. However, slices of the set are not expected to be limit sets of IFSs in general, which gives rise to essential difficulties. 
Moreover, since the present setting is not covered by the examples treated in \cite{NW}, new ideas are required to address our slicing problems.

\subsection{Main results}
In this section we give 
rigorous definitions for our setting and present main results on them. We first define the Sierpi\'nski tetrahedron.
\begin{df}
\label{sier}
Let $v_0=(0, 0, 0), v_1=(0, 0, 1), v_2=(0, 1, 1),$ and $v_3=(1, 1, 1)$ be vertices of a tetrahedron in $\mathbb R^3$. 
For each $i \in \{0,1,2,3\}$, define a contracting map $f_i: \mathbb R^3\rightarrow \mathbb R^3$ by $f_i(x)=(x+v_i)/2$. Then there uniquely exists a non-empty compact subset $J$ of $\mathbb R^3$ such that $
J=\bigcup_{i=0}^3f_i(J)$
(see e.g. \cite{Fal, Hut}),
which is called the Sierpi\'nski tetrahedron.
\end{df}

For $x=(x_1, x_2, x_3)\in \mathbb{R}^3$, set $p_z(x)=x_3$. Let $c\in [0, 1]$ and consider the following level set $J_{c}$ of $J$: 
\begin{align*}
J_{c}=\{x\in J\ :\ p_z(x)=c\},
\end{align*}
which is the slice of $J$ at height $c$.


Let $\Delta$ be a topological space. 
For a collection $\mathfrak{U}$ of subsets of $\Delta$, let $N(\mathfrak{U})$ be the (abstract) simplicial complex, which is called the nerve,  
whose simplexes are finite non-empty subsets of $\mathfrak{U}$ with non-empty intersection.
Then for every $q\geq 0,$ the $q$th \v{C}ech homology group $\check{H}_q(\Delta)$ is defined as the inverse limit
$\check{H}_q(\Delta) = \varprojlim_{\mathfrak{U}} H_{q} (N(\mathfrak{U}))$, where $H_{q}(\mathcal{K})$ is the $q$th homology group of a simplicial complex $\mathcal{K}$ and 
where $\mathfrak{U}$ runs over all finite open coverings of $\Delta$ ordered by refinement. 
Similarly, the $q$th \v{C}ech cohomology group $\check{H}^q(\Delta) = \varinjlim_{\mathfrak{U}} H^{q} (N(\mathfrak{U}))$ is defined as the direct limit.

A number $c\in [0, 1]$ is called a dyadic rational if there exist $n\in \mathbb N$ and $k\in \{0, 1,\ldots, 2^n-1\}$ such that $c=k/2^n.$ 
The following is the first main result in this paper.
\begin{syuA}
\label{SyuA}
Let $c\in [0, 1].$ Then the following holds.
\begin{itemize}

\item[(a)] If $c$ is a dyadic rational, then $J_c$ is a finite disjoint union of copies of the so-called Sierpi\'nski gasket. 
Moreover, $\check{H}_0(J_c)$ and $\check{H}^0(J_c)$ are (isomorphic to) $\ZZ^r$ with some $r \geq 1$, ${\rm rank}\check{H}_1(J_c) = \infty = {\rm rank}\check{H}^1(J_c),$ and $\check{H}_q(J_c) =\check{H}^q(J_c)=0$ for all $q\geq 2.$

\item[(b)]If $c$ is a non-dyadic rational, then $J_c$ is totally disconnected and $\check{H}_q(J_c) = \check{H}^q(J_c)=0$ for all $q\geq 1.$
\end{itemize}

\end{syuA}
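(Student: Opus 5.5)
The plan is to exploit the self-similarity of $J$ with respect to the height function $p_z$. Since $p_z(v_0)=0$ and $p_z(v_1)=p_z(v_2)=p_z(v_3)=1$, we have $p_z(f_0(x))=p_z(x)/2$ and $p_z(f_i(x))=(p_z(x)+1)/2$ for $i=1,2,3$. This gives the recursive slice identities
\[
J_c=f_0(J_{2c})\ (c<\tfrac12),\qquad J_c=\bigcup_{i=1}^3 f_i(J_{2c-1})\ (c>\tfrac12),\qquad J_{1/2}=f_0(J_1)\cup\bigcup_{i=1}^3 f_i(J_0).
\]
The base cases are immediate. $J_0=\{v_0\}$, since $f_0$ fixes $v_0$ and the other pieces lie in $\{z\ge 1/2\}$. $J_1$ is the intersection of $J$ with the face $v_1v_2v_3$, which is a Sierpi\'nski gasket $G$. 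For $c=1/2$ the points $f_i(v_0)=v_i/2$ are the vertices of the triangle $f_0(J_1)$, so $J_{1/2}=f_0(G)$.

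Iterating with the doubling map $T(c)=2c \bmod 1$ (choosing the endpoint $1$ when needed), we write $J_c$ after $n$ steps as $\bigcup_{w\in W_n(c)} f_w(J_{c_n})$. Here $W_n(c)$ is a finite set of words, and each $f_w$ is a similitude of ratio $2^{-n}$.

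For (a), let $c=k/2^n$ with $0<c<1$. After finitely many steps the residual height becomes $1$ or $1/2$, so $J_c$ is a finite union of similar copies $f_w(G)$. Any extra points coming from $J_0$-branches are vertices of these copies, as in the case $c=1/2$. The case $c=0$ gives a single point and must be treated as degenerate; I would flag that the statement is really meant for $c\in(0,1]$, or else that $J_0$ is read as a degenerate gasket.

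Next I would show that distinct copies are disjoint. Two level-$n$ subtetrahedra $f_w(T)$ and $f_{w'}(T)$ meet in at most one vertex. So it suffices to check that such a common vertex is never a point of both gasket copies in the slice. I would prove this by induction on $n$, tracking which vertices of each cell lie at the slice height. The top face of a cell is at height $c$ exactly when the bottom vertex of the adjacent cell is, and such a bottom vertex is a $J_0$-type point that is already absorbed into a gasket copy rather than lying on another one. \textbf{This combinatorial bookkeeping is the step I expect to be the main obstacle}, and I would first try it on the explicit cases $c=1/4,3/4$ to find the invariant.

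Given disjointness, Čech (co)homology is additive over finite disjoint unions of compact sets, so it suffices to know it for $G$. For $G$ we have $\check H_0=\check H^0=\mathbb Z$, infinite rank in degree $1$, and $0$ in degrees $\ge2$. I would quote this from \cite{NW} or \cite{Sumi09}, or derive it from the level-$n$ cell coverings: their nerves have first Betti number growing like $3^n$, the bonding maps are injective on $H_1$, and the nerves can be taken $1$-dimensional up to homotopy, which kills degrees $\ge2$. This yields $\mathbb Z^r$ with $r=\#$ copies $\ge1$.

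For (b), let $c$ be non-dyadic. Then every iterate $c_n=T^n(c)$ lies in $(0,1)\setminus\{1/2\}$, and no cell ever has a vertex at height $c$. Because distinct level-$n$ cells meet only at vertices, the sets $J_c\cap f_w(T)$ for $w\in W_n(c)$ are pairwise disjoint compact sets, of diameter at most $2^{-n}\operatorname{diam}T$. So for each $n$, $J_c$ is partitioned into finitely many clopen pieces whose diameters tend to $0$. Hence $J_c$ is totally disconnected, in fact zero-dimensional. Finite covers by disjoint clopen sets are cofinal among finite open covers of $J_c$, and their nerves are discrete $0$-complexes. Therefore $H_q$ and $H^q$ of these nerves vanish for $q\ge1$, and so do the inverse and direct limits.
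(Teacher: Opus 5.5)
\textbf{The gap: disjointness in part (a).} You flag the disjointness of the gasket copies in part (a) as the main obstacle and leave it unproved. The step is not optional. The statement asserts a \emph{disjoint} union, and your computation of $\check H_q(J_c)$ by additivity over the pieces depends on it. Your heuristic about top faces and bottom vertices of adjacent cells is a guess to be tested on examples. As written, it does not exclude two copies meeting at a common cell vertex.

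\textbf{The missing idea.} The argument is short and needs no height bookkeeping.
Your recursion gives $J_c=\bigcup_w f_{w0}(G)$ with all $w$ of one common length. So every copy lies in a cell $f_{w0}(T)$, all of the same level, and every such address ends in $0$.
In barycentric coordinates $f_i(T)=\{\lambda_i\ge 1/2\}$. Hence $f_i(T)\cap f_j(T)=\{(v_i+v_j)/2\}$ for $i\neq j$, and a vertex $v_j$ of $T$ lies in $f_u(T)$ only if $u=j\cdots j$.
It follows that two distinct cells of the same level can meet only if their addresses are $s\,i\,j^m$ and $s\,j\,i^m$ with $i\ne j$ and $m\ge 0$. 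Two addresses that both end in $0$ cannot have this form, since that would force $i=j=0$. So the cells $f_{w0}(T)$ are already pairwise disjoint, and hence so are the copies.
The paper obtains the same conclusion from a planar projection instead. There $f_0$ and $f_1$ both project to $\psi_1$, so the copies land in the lower-left quarters $\psi_{\tilde w1}([0,1]^2)$ of distinct dyadic squares $\psi_{\tilde w}([0,1]^2)$, which are disjoint (Lemma~\ref{Split}).

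\textbf{The rest of the proposal.}
\begin{itemize}
\item Your part (b) is correct and takes a different, more self-contained route. You use that cells meet only at vertices of dyadic height, which gives a clopen partition of $J_c$ at every level, and then apply cofinality directly in the definition of \v{C}ech (co)homology. The paper instead uses Lemma~\ref{Split} at the levels where $a_j(c)=0$, together with the \v{C}ech--Sumi isomorphism (Theorem~\ref{th:cechsumi}).
\item Your remark about $c=0$ is right. Likewise, under the paper's definition $c=1$ is formally not dyadic, yet $J_1$ is a gasket; it belongs in (a), as you assume.
\item There is one slip in your optional derivation for $G$. The bonding maps on $H_1$ of the nerves are surjective, not injective: the $3$-cycle formed by the three children of a cell maps to zero. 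Injective bonding maps would make the inverse limit embed in a finite-rank group. Quoting Sumi's computation, as the paper does, avoids this.
\end{itemize}
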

The proof of Main Theorem A is reduced to the theory of non-autonomous iterated function systems (see Definition~\ref{NIFS}), which generalize iterated function systems.
More precisely, for any $c\in [0, 1]$ the slice $J_c$ can be realized as the limit set of a non-autonomous IFS (see Lemma~\ref{SliceNIFS}). 
For any $c\in [0, 1]$ the associated non-autonomous IFS is determined by the binary expansion of $c$, defined as follows. 
\begin{df}
\label{bi}
For $c\in[0,1]$, let $(a_j(c))_{j=1}^\infty=a_1(c)a_2(c)\cdots a_j(c)\cdots$ be a sequence 
with $a_j(c)\in\{0,1\}$ such that
\[c=\sum_{j=1}^\infty \frac{a_j(c)}{2^j}.\]
Among the two possible representations of a dyadic rational except $0$,
we choose the one satisfying
\[\#\{j\in\mathbb N \colon a_j(c)=1\}=\infty,\]
so that the expansion is uniquely determined. In case $c=0,$ define $a_j(0)=0$ for any $j\ge 1.$
We call the sequence $(a_j(c))_{j=1}^\infty$ the binary expansion of $c.$
\end{df}
In what follows, for each finite word $\omega=\omega_1\cdots\omega_n\in \{0,1\}^n$ we set $\overline{\omega}=\omega_1\cdots\omega_n\omega_1\cdots\omega_n\omega_1\cdots\omega_n\cdots$. For example, $(a_j(3/8))_{j=1}^{\infty}=010\overline{1}$. Remark that if $c$ is a non-dyadic rational, $\#\{j\in \mathbb N\colon a_j(c)=0\}=\#\{j\in \mathbb N\colon a_j(c)=1\}=\infty.$

As in the case of IFSs, the limit set of a non-autonomous IFS admits a natural covering determined by the system (see Lemma~\ref{lem:Huchinson}).
This allows us to associate a family of nerves, denoted by $\mathcal{N}_{1, k}$ for $k > 1$ (see Definition~\ref{nerve}).
We then study the growth rate of the rank of the homology groups of $\mathcal{N}_{1, k}$ as $k \to \infty$, which reflect the number of connected components or holes in the slices.

\begin{syuB}
\label{SyuB}
Let $c\in [0, 1].$ Then the following holds.
\begin{itemize}
\item[(a)]Let $c$ be a dyadic rational with its binary expansion $a_1\cdots a_n0\overline{1}$ and $\ell=\#\{j\in \{1,\ldots, n\}\colon a_j=0\}.$ 
Then we have ${\rm rank}{\check H}_0(J_c)=3^{n-\ell}$ and 
\[\lim_{n\to \infty}\frac{1}{n}{\log {\rm rank}{H}_1(\mathcal N_{1, n+1})}=\log 3.\]
The same statements hold for cohomology. 
\item[(b)]If $c$ is a non-dyadic rational with its binary expansion $(a_j(c))_{j=1}^{\infty}$,  then for any $n\in \mathbb N$ we have \[\log {\rm rank}{H}_0(\mathcal N_{1, n+1}) = \log {\rm rank}{H}^0(\mathcal N_{1, n+1}) = \sum_{j=1}^n a_j(c) \log 3.\]
\end{itemize}
\end{syuB}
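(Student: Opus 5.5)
The plan is to use the realization of $J_c$ as the limit set of a non-autonomous IFS (Lemma~\ref{SliceNIFS}) and to read off everything from the $z$-coordinate dynamics. Since $p_z\circ f_0(x)=p_z(x)/2$ and $p_z\circ f_i(x)=(p_z(x)+1)/2$ for $i=1,2,3$, set $c_0=c$ and $c_j=2c_{j-1}-a_j(c)$. Then $c_j\in[0,1]$ and
\[
J_{c_{j-1}}=\begin{cases} f_0(J_{c_j}) & \text{if } a_j(c)=0,\\ f_1(J_{c_j})\cup f_2(J_{c_j})\cup f_3(J_{c_j}) & \text{if } a_j(c)=1,\end{cases}
\]
apart from the boundary coincidences at heights $1/2$ and $1$ discussed below. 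So the level-$k$ pieces of the natural covering (Lemma~\ref{lem:Huchinson}) are indexed by the words that use $f_0$ at the positions with $a_j=0$ and any of $f_1,f_2,f_3$ at the positions with $a_j=1$. There are $3^{\sum_{j\le k}a_j(c)}$ such pieces, and each is nonempty because $J_{c'}\neq\emptyset$ for every $c'\in[0,1]$.

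The key geometric fact is about the subtetrahedra $f_1(T),f_2(T),f_3(T)$, where $T$ is the convex hull of $v_0,\dots,v_3$. They pairwise meet only in the single points $(v_i+v_{i'})/2$ with $i,i'\ge1$, and these all lie at height $1$. Hence two level-$k$ pieces whose words first differ at a step $j$ with $a_j=1$ lie in different $f_i(T)$ at height $(c_j+1)/2$ (after rescaling). They can only meet if $c_j=1$.

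For (b), $c$ is non-dyadic, so $c_j\in(0,1)$ for every $j$. Therefore the level-$(n+1)$ pieces are pairwise disjoint. The nerve $\mathcal N_{1,n+1}$, in its intended form from Definition~\ref{nerve}, is then a discrete set of $3^{\sum_{j=1}^{n}a_j(c)}$ vertices; here I expect the index shift in $\mathcal N_{1,n+1}$ to account for the $n$ versus $n+1$ bookkeeping in the statement. This gives $\operatorname{rank}H_0=\operatorname{rank}H^0=3^{\sum_{j\le n}a_j}$, which is the claimed identity.

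For (a), let $c=a_1\cdots a_n0\overline{1}$. Then $c_j\in(0,1)$ for $j<n$ (at least for the relevant branchings), $c_n=1/2$, and $c_j=1$ for $j\ge n+1$. By the disjointness argument, the $3^{n-\ell}$ level-$n$ pieces are pairwise disjoint. Each piece is a similar copy of $J_{1/2}=f_0(J_1)\cup\bigcup_{i\ge1}f_i(J_0)$. Here $J_1$ is the Sierpi\'nski gasket on the face $v_1v_2v_3$, $J_0=\{v_0\}$, and $f_i(v_0)=f_0(v_i)$. So $J_{1/2}=f_0(J_1)$ is a gasket, hence connected. This gives $\operatorname{rank}\check H_0(J_c)=3^{n-\ell}$, and also part (a) of Main Theorem A as far as the decomposition into gaskets goes.

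For $H_1$, below level $n+1$ each piece is covered by its gasket subtriangles of generation $m=k-n-1$. In the gasket these meet pairwise in single points and never three at a time. So the nerve restricted to one copy is a connected graph with $3^m$ vertices and $3(3^m-1)/2$ edges, whose $H_1$ has rank $(3^m-1)/2$. Summing over the disjoint copies gives
\[
\operatorname{rank}H_1(\mathcal N_{1,k})=3^{n-\ell}\,\frac{3^{k-n-1}-1}{2},
\]
and dividing the logarithm by $k$ yields the limit $\log3$. Cohomology follows from universal coefficients, since all the groups are finitely generated and free (no torsion in graph homology).

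The main obstacle is bookkeeping at the exceptional heights $1/2$ and $1$. First, the definition of $\mathcal N_{1,k}$ may use closed pieces that pick up the extra point coincidences $f_i(J_0)=\{f_0(v_i)\}$, which must be checked not to add spurious nerve simplices. Second, the exact indexing of $\mathcal N_{1,n+1}$ must be matched against Definition~\ref{nerve} to confirm that the limit is taken in the level variable with $c$ fixed.
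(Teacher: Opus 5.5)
Your proof is correct, but it gets the key disjointness by a different mechanism from the paper.

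\textbf{The paper's route.} It projects to the square via Lemma~\ref{Proj} and uses the planar Lemma~\ref{Split}. Since $\psi_1(X),\psi_2(X),\psi_3(X)$ touch along their boundaries, it needs a later forced symbol $1$ to push the pieces into disjoint lower-left quarter squares. This is a digit $a_{j_0}(c)=0$ with $j_0>n$ in Proposition~\ref{NotDyaConnNum}, and $a_{n+1}=0$ in Lemma~\ref{ConnNum}. The growth rate $\log 3$ for ${\rm rank}\,H_1$ is then imported from Sumi's result on ${\rm SG}$ rather than computed.

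\textbf{Your route.} You stay in $\mathbb R^3$, where $f_1(T),f_2(T),f_3(T)$ meet only in midpoints at height $1$. Hence two pieces that first branch at step $j$ are separated whenever $c_j\neq 1$. This handles (b) and the level-$n$ splitting in (a) with one argument, and it explains the dichotomy directly through the heights $c_j$. Your description of $\mathcal N_{1,k}$ as $3^{n-\ell}$ disjoint copies of a gasket graph gives an exact rank formula, not just the exponential rate, so (a) becomes self-contained.

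Working with $\Phi_c$ instead of $\Psi_c$ is harmless. $P$ is injective on the plane $\{p_z=c\}$, which contains every piece, so the two nerves are isomorphic.

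\textbf{One bookkeeping fix.} By Definition~\ref{nerve}, the vertex set of $\mathcal N_{1,k}$ is $A_c^{(1)}\times\cdots\times A_c^{(k-1)}$. Within each copy the free positions are therefore $n+2,\dots,k-1$, so the generation is $m=k-n-2$. This gives ${\rm rank}\,H_1(\mathcal N_{1,k})=3^{n-\ell}(3^{k-n-2}-1)/2$ for $k\ge n+2$; your formula is actually the one for $\mathcal N_{1,k+1}$. The limit is unaffected.

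Your worry about spurious simplices from $f_i(J_0)$ is moot. The pieces of $\mathcal N_{1,k}$ are $f_\omega(J_{c_{k-1}})$ with $\omega_{n+1}=0$ forced, and you have already shown $J_{1/2}=f_0(J_1)$.
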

As consequences of Main Theorem~B(b), we obtain the following corollaries.
\begin{cor}
\label{Birk}
For Lebesgue almost every $c\in [0, 1]$ with $(a_j(c))_{i=1}^{\infty}$, we have \[\lim_{n\to \infty}\frac{1}{n}{\log {\rm rank}{H}_0(\mathcal N_{1, n+1})}=\frac{\log 3}{2} .\] 
\end{cor}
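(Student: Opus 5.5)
The plan is to combine Main Theorem~B(b) with the strong law of large numbers (equivalently, Birkhoff's ergodic theorem for the doubling map). The binary digits of a Lebesgue-typical point behave like fair coin flips, and Main Theorem~B(b) turns the rank growth into an average of those digits.

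\textbf{Step 1: discard a null set.} The set of dyadic rationals in $[0,1]$ is countable, so it has Lebesgue measure zero. The same holds for the non-dyadic rationals. Hence it is enough to work with $c$ in the complement of these null sets. For such $c$ the binary expansion $(a_j(c))_{j=1}^\infty$ of Definition~\ref{bi} agrees with the usual one, and Main Theorem~B(b) applies. Dividing its identity by $n$ gives, for every $n\in\mathbb N$,
\[
\frac{1}{n}\log {\rm rank}\, H_0(\mathcal N_{1,n+1}) = \log 3\cdot \frac{1}{n}\sum_{j=1}^n a_j(c).
\]

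\textbf{Step 2: show the digit averages tend to $1/2$ almost everywhere.} Under Lebesgue measure on $[0,1]$, the digit functions $c\mapsto a_j(c)$ are i.i.d.\ Bernoulli$(1/2)$ random variables. This holds because each cylinder $\{c: a_1(c)=\omega_1,\dots,a_n(c)=\omega_n\}$ is a dyadic interval of length $2^{-n}$, up to endpoints, which form a null set. By the strong law of large numbers (Borel's normal number theorem),
\[
\frac1n\sum_{j=1}^n a_j(c)\to \tfrac12 \quad\text{for a.e. } c.
\]
Alternatively, one can write $a_j(c)=\mathbf 1_{[1/2,1)}(T^{j-1}c)$, where $T(x)=2x \bmod 1$. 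Since $T$ preserves Lebesgue measure and is ergodic, Birkhoff's ergodic theorem gives the same limit $\int \mathbf 1_{[1/2,1)}\,dx = 1/2$ almost everywhere.

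\textbf{Step 3: combine.} The intersection of the two full-measure sets from Steps 1 and 2 still has full measure. On it, the identity from Step~1 together with the limit from Step~2 yields the limit $\frac{\log 3}{2}$.

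\textbf{Main obstacle.} There is no real obstacle here. The only care needed is the convention in Definition~\ref{bi}, which differs from the usual binary expansion only on the dyadic rationals, a countable null set. This, together with the restriction of Main Theorem~B(b) to non-dyadic rationals, is why the exceptional set is handled explicitly in Step~1.
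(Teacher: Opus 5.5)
Your proposal is correct and follows the paper's argument. The paper likewise uses Birkhoff's ergodic theorem to get $\frac1n\sum_{j=1}^n a_j(c)\to\frac12$ for a.e.\ $c$ and combines this with Main Theorem~B(b); you additionally spell out the null-set bookkeeping that the paper leaves implicit. One small wording point: in the paper ``non-dyadic rational'' means ``not a dyadic rational'', so Theorem~B(b) already covers all irrationals and only the countable set of dyadic rationals needs to be discarded, which makes your extra removal of the remaining rationals harmless but unnecessary.
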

\begin{proof}
By Birkoff's ergodic theorem, for Lebesgue  almost every $c\in [0, 1],$ we have $\displaystyle{\lim_{n\to \infty}\frac{1}{n}\sum_{j=1}^n a_j(c)=\frac{1}{2}}$. Combining this with Main Theorem B yields the desired one. 
\end{proof}

\begin{cor}
\label{Dim}
Let $c$ be a non-dyadic rational. Then 

\begin{align*}
\dim_{\rm H}J_c=\underline{\dim}_{\rm B}J_c=\liminf_{n\to \infty}\frac{\log {\rm rank}{H}_0(\mathcal N_{1, n+1})}{n\log 2}
\end{align*}
 and 
\begin{align*}
\dim_{\rm P}J_c=\overline{\dim}_{\rm B}J_c=\limsup_{n\to \infty}\frac{\log {\rm rank}{H}_0(\mathcal N_{1, n+1})}{n\log 2},
\end{align*}
 where $\dim_{\rm H}, \underline{\dim}_{\rm B}, \dim_{\rm P},$ and $\overline{\dim}_{\rm B}$ denote the Hausdorff, lower box, packing, and upper box dimensions, respectively.
\end{cor}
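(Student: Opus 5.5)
We have to prove Corollary~\ref{Dim}: for non-dyadic $c$, the Hausdorff and lower box dimensions of $J_c$ equal the liminf of $\log {\rm rank}H_0(\mathcal N_{1,n+1})/(n\log 2)$, and the packing and upper box dimensions equal the corresponding limsup.

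By Main Theorem~B(b), $\log {\rm rank}H_0(\mathcal N_{1,n+1}) = s_n\log 3$ with $s_n=\sum_{j=1}^n a_j(c)$. So it suffices to show
\[
\dim_{\rm H}J_c=\underline{\dim}_{\rm B}J_c=\liminf_{n\to\infty}\frac{s_n\log 3}{n\log 2}
\]
and the analogous identity for the limsup. The approach is to use the non-autonomous IFS realization of $J_c$ from Lemma~\ref{SliceNIFS} together with its natural covering from Lemma~\ref{lem:Huchinson}.

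\textbf{Counting the level-$n$ pieces.} At level $n$, $J_c$ is covered by the images of the level-$n$ compositions of the non-autonomous system. Each such image has diameter comparable to $2^{-n}$.

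\emph{Claim:} when $a_j(c)=1$ the $j$th stage uses three maps, and when $a_j(c)=0$ it uses one. This should be read off from Lemma~\ref{SliceNIFS}: slicing the tetrahedron at a binary digit selects either one of the subtetrahedra (the one at the apex, or the bottom one) or three of them. The number of pieces at level $n$ is then $N_n=3^{s_n}$. This matches Main Theorem~B(b), since for totally disconnected $J_c$ and fine $n$ the nerve components correspond to the pieces.

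Two properties of the pieces are needed:
\begin{itemize}
\item they are pairwise disjoint, or at least meet only in a uniformly bounded way (this is a separation property);
\item each ball of radius $2^{-n}$ meets at most a uniformly bounded number $M$ of them.
\end{itemize}
Given these, $N_{2^{-n}}(J_c)\asymp 3^{s_n}$, and monotonicity lets us interpolate between the radii $2^{-n}$ and $2^{-n-1}$. This yields both box dimension formulas directly.

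\textbf{Lower bounds.} For the Hausdorff dimension, the plan is to put the natural measure $\mu$ on $J_c$, which gives mass $3^{-s_n}$ to each level-$n$ piece. By the bounded-overlap property, $\mu(B(x,2^{-n}))\le M\,3^{-s_n}$ for every $x$. Hence the lower local dimension satisfies
\[
\underline{d}_\mu(x)\ge \liminf_{n\to\infty}\frac{s_n\log 3}{n\log 2}
\]
uniformly in $x$, and the mass distribution principle gives $\dim_{\rm H}J_c\ge\liminf$. Combined with $\dim_{\rm H}\le\underline{\dim}_{\rm B}$, this yields equality in the first identity.

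The packing dimension is handled in the same way using upper local dimensions:
\[
\overline{d}_\mu(x)\ge\limsup_{n\to\infty}\frac{s_n\log 3}{n\log 2}\quad\text{for every }x,
\]
so $\dim_{\rm P}J_c\ge\limsup$. Combined with $\dim_{\rm P}\le\overline{\dim}_{\rm B}$, this gives the second identity.

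\textbf{Main obstacle.} The main difficulty is establishing the bounded-overlap property. Adjacent sub-tetrahedra of $J$ touch at vertices, so the slices of adjacent pieces could meet.

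For non-dyadic $c$, the slicing height never hits a vertex height $k/2^n$. This should force the level-$n$ pieces of $J_c$ to be disjoint. I would verify it by checking that two neighbouring subtetrahedra at level $n$ share only vertices lying at dyadic heights.

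The number of pieces that can meet a ball of radius $2^{-n}$ is then bounded by a volume argument, since all pieces sit inside distinct dyadic-scale tetrahedra.
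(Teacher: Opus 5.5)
Your argument is correct, but it takes a different route from the paper. The paper proves no dimension estimate itself. It quotes \cite[Main Theorem A]{Nak22}, which already gives $\dim_{\rm H}J_c=\underline{\dim}_{\rm B}J_c=\liminf_{n}\frac{1}{n}\sum_{j=1}^n a_j(c)\frac{\log 3}{\log 2}$ and the corresponding $\limsup$ formula for $\dim_{\rm P}$ and $\overline{\dim}_{\rm B}$. It then substitutes Main Theorem~B(b).

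You instead reprove those formulas directly:
\begin{itemize}
\item the $3^{s_n}$ level-$n$ pieces give $N_{2^{-n}}(J_c)\lesssim 3^{s_n}$, and bounded overlap gives the matching lower bound;
\item the natural measure with cylinder masses $3^{-s_n}$ satisfies $\mu(B(x,2^{-n}))\le M3^{-s_n}$;
\item this yields the Hausdorff lower bound by the mass distribution principle, and the packing lower bound via upper local dimensions.
\end{itemize}
Your route is self-contained and shows why the nerve count and the covering number agree up to a constant. The paper's deduction is only two lines and inherits the generality of \cite{Nak22}.

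The step you flag as the main obstacle, disjointness of the pieces, is not actually needed. Bounded overlap holds for every $c$, because distinct level-$n$ words give distinct dyadic squares $\psi_\omega(X)$ (or sub-tetrahedra with disjoint interiors). Your bound on $\mu(B(x,2^{-n}))$ uses only the cylinder masses, not exact disjointness. In any case, disjointness of the pieces $\psi_\omega(K_n)$ for non-dyadic $c$ is already proved in Proposition~\ref{NotDyaConnNum}. The rank formula there holds for every $n$, not only for fine $n$.
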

\begin{proof}
Let $c$ be a non-dyadic rational with its binary expansion $(a_j(c))_{j=1}^{\infty}$.
By \cite[Main Theorem A]{Nak22}, we have 
\begin{align*}
\dim_{\rm H}J_c=\underline{\dim}_{\rm B}J_c=\liminf\limits_{n\rightarrow \infty}\frac{1}{n}\sum_{j=1}^{n}\frac{\log 3}{\log 2}a_j(c)
\end{align*}
 and 
\begin{align*}
\dim_{\rm P}J_c=\overline{\dim}_{\rm B}J_c=\limsup\limits_{n\rightarrow \infty}\frac{1}{n}\sum_{j=1}^{n}\frac{\log 3}{\log 2}a_j(c).
\end{align*}
Combining this with Main Theorem B yields the desired one. 
\end{proof}
The same statements as Corollaries~\ref{Birk} and \ref{Dim} hold for cohomology. 

\section{Non-autonomous iterated function systems and \\ \v{C}ech-Sumi homology group}
In this section, we summarize the theory of a homological framework in \cite{NW}. We first introduce the definition of a non-autonomous iterated function system (NIFS), which plays a central role in this paper.
\begin{df}\label{NIFS}

A non-autonomous iterated function system $(\Phi^{(j)})_{j=1}^{\infty}$ on a compact metric space $X$ is a sequence of collections   $\Phi^{(j)} = \{\varphi_{i}^{(j)}  \colon X \to X\}_{i \in I^{(j)}}$ of maps, where each index set $I^{(j)}$ is finite, and 
the Lipschitz constants of $\varphi_{i}^{(j)}$ are uniformly bounded above by a constant strictly less than $1$ for all $j \geq 1$ and $i \in I^{(j)}$. 

Let $(\Phi^{(j)})_{j=1}^{\infty}$ be a NIFS. Define the coding map $\Pi  \colon \prod_{j=1}^{\infty} I^{(j)}\to X$ of $(\Phi^{(j)})_{j=1}^{\infty}$ by \[\{\Pi(i_{1}, i_{2}, \dots)\}  = \bigcap_{j=1}^{\infty} \varphi^{(1)}_{i_{1}} \circ  \varphi^{(2)}_{i_{2}} \circ \dots  \circ \varphi^{(j)}_{i_{j}} (X),\] which is a singleton by the uniform contraction condition.
Define the limit set $J$ of $(\Phi^{(j)})_{j=1}^{\infty}$ by $J= \Pi(\prod_{j=1}^{\infty} I^{(j)}).$ 
\end{df}
Let $(\Phi^{(j)})_{j=1}^{\infty}$ be a NIFS. For every $j \geq 1$, consider a NIFS $(\Phi^{(j-1+k)})_{k=1}^{\infty}$ and its limit set $J_{j}$. 
Compared with usual IFSs, the limit set of a NIFS does not exhibit self-similarity, but it possesses the following nested structure.

\begin{lem}\cite[Lemma 3.3]{NW}\label{lem:Huchinson}
We have $J = J_{1}$, and for every $j < k$ we have 
\[J_{j} = \bigcup_{(i_{j}, \dots, i_{k-1}) \in I^{(j)} \times \dots \times  I^{(k-1)}} 
     \varphi^{(j)}_{i_{j}} \circ  \dots  \circ \varphi^{(k-1)}_{i_{k-1}}(J_{k}).\]
\end{lem}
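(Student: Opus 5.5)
We have to prove Lemma~\ref{lem:Huchinson}: $J=J_1$, and for $j<k$ each $J_j$ is the union of the images of $J_k$ under all compositions $\varphi^{(j)}_{i_j}\circ\cdots\circ\varphi^{(k-1)}_{i_{k-1}}$.

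The identity $J=J_1$ is immediate. The NIFS $(\Phi^{(1-1+k)})_{k=1}^\infty$ is literally $(\Phi^{(k)})_{k=1}^\infty$, so it has the same coding map and hence the same limit set.

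For the decomposition, write $\Pi_j$ for the coding map of the shifted system $(\Phi^{(j-1+k)})_{k\ge1}$, defined on $\prod_{m\ge j} I^{(m)}$. The key step is the cocycle identity
\[
\Pi_j(i_j,i_{j+1},\dots)=\varphi^{(j)}_{i_j}\circ\cdots\circ\varphi^{(k-1)}_{i_{k-1}}\bigl(\Pi_k(i_k,i_{k+1},\dots)\bigr).
\]
To prove it, set $\psi=\varphi^{(j)}_{i_j}\circ\cdots\circ\varphi^{(k-1)}_{i_{k-1}}$ and let $y=\Pi_k(i_k,\dots)$. For every $m\ge k$, the point $y$ lies in $\varphi^{(k)}_{i_k}\circ\cdots\circ\varphi^{(m)}_{i_m}(X)$. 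Applying $\psi$ gives
\[
\psi(y)\in\varphi^{(j)}_{i_j}\circ\cdots\circ\varphi^{(m)}_{i_m}(X).
\]
The sets $\varphi^{(j)}_{i_j}\circ\cdots\circ\varphi^{(m)}_{i_m}(X)$ are nested in $m$, because each map sends $X$ into $X$. So $\psi(y)$ lies in their intersection over all $m\ge k$, and this intersection equals the full intersection over $m\ge j$. By the uniform contraction hypothesis that full intersection is the singleton $\{\Pi_j(i_j,\dots)\}$. Hence $\psi(y)=\Pi_j(i_j,\dots)$.

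Both inclusions now follow from the identity.
\begin{itemize}
\item[($\subseteq$)] Any point of $J_j$ has the form $\Pi_j(i_j,i_{j+1},\dots)$. Splitting the code at position $k$ and applying the identity shows it equals $\psi(\Pi_k(i_k,\dots))$, which lies in the image of $J_k$ under the composition indexed by $(i_j,\dots,i_{k-1})$.
\item[($\supseteq$)] Given indices $(i_j,\dots,i_{k-1})$ and a point $\Pi_k(i_k,\dots)\in J_k$, concatenate the two codes. The identity shows that $\psi(\Pi_k(i_k,\dots))$ equals $\Pi_j$ of the concatenated code, so it belongs to $J_j$.
\end{itemize}

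The only point needing care is the step in the identity where we use that the maps send $X$ into $X$, so the cylinder sets are nested and intersecting over $m\ge k$ loses nothing. Apart from that the argument is routine bookkeeping; no compactness beyond the singleton property is required.
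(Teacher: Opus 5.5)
Your proof is correct: the identity $\Pi_j(i_j,i_{j+1},\dots)=\varphi^{(j)}_{i_j}\circ\cdots\circ\varphi^{(k-1)}_{i_{k-1}}(\Pi_k(i_k,i_{k+1},\dots))$ follows from the nestedness of the cylinder sets and the singleton property, and it gives both inclusions at once. The paper gives no proof of this lemma (it is quoted from \cite[Lemma 3.3]{NW}), so there is no in-paper argument to compare against, and yours is the standard one.
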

By Lemma~\ref{lem:Huchinson} the collection $\{\varphi^{(j)}_{i_{j}} \circ  \dots  \circ \varphi^{(k-1)}_{i_{k-1}}(J_{k})\}_{(i_{j}, \dots, i_{k-1}) \in I^{(j)} \times \dots \times  I^{(k-1)}}$ is a covering 
 of $J_j$ for every $1 \leq j < k$. 
This enables us to define a simplicial complex.
\begin{df}\label{nerve}
Let $(\Phi^{(j)})_{j=1}^{\infty}$ be a NIFS.
For $1\le j<k$, define $\mathcal N_{j,k}$ as the nerve for the covering of Lemma~\ref{lem:Huchinson}. Namely,  $\mathcal N_{j,k}$ is the simplicial complex defined as follows.
Its vertex set is $I^{(j)} \times \cdots \times I^{(k-1)}$.
For each vertex $v=(i_j,\ldots,i_{k-1})$, set
$\varphi_v := \varphi_{i_j}^{(j)} \circ \cdots \circ \varphi_{i_{k-1}}^{(k-1)}.$
A finite set $\{v_0,\ldots,v_q\}$ of distinct vertices forms a simplex of $\mathcal N_{j,k}$ if and only if
\[\bigcap_{p=0}^q \varphi_{v_p}(J_k)\neq\emptyset.\]
\end{df}
Define a simplicial map $\phi_{j, k}  \colon \mathcal{N}_{j, k+1} \to \mathcal{N}_{j, k}$ 
so that \[\phi_{j, k}(i_{j}, \dots,  i_{k-1}, i_{k}) = (i_{j}, \dots,  i_{k-1}),\] which induces a homomorphism 
\[(\phi_{j, k})_{*}  \colon H_{q}(\mathcal{N}_{j, k+1}) \to H_{q}(\mathcal{N}_{j, k})\] 
on the homology groups (with $\ZZ$ coefficients) for every $q \geq 0$. 
Define the $q$th \v{C}ech-Sumi homology group $\varprojlim_{k}{H}_q(\mathcal{N}_{1, k})$ of the NIFS $(\Phi^{(j)})_{j=1}^{\infty}$ as the inverse limit of the inverse system $\{(\phi_{1, k})_{*}  \colon H_{q}(\mathcal{N}_{1, k+1}) \to H_{q}(\mathcal{N}_{1, k})\}_{k = 2}^{\infty}$.
Similarly, define the $q$th \v{C}ech-Sumi cohomology group $\varinjlim_{k}{H}^q(\mathcal{N}_{1, k})$ as the direct limit of the direct system $\{(\phi_{1, k})^{*}  \colon H^{q}(\mathcal{N}_{1, k}) \to H^{q}(\mathcal{N}_{1, k+1})\}_{k = 2}^{\infty}$.
Then the following holds.
\begin{thm}\cite[Theorem 4.6]{NW}
\label{th:cechsumi}
There is an isomorphism between 
the \v{C}ech-Sumi homology group $\varprojlim_{k}{H}_q(\mathcal{N}_{1, k})$ and 
the \v{C}ech homology group $\check{H}_q(J)$ for every $q \geq 0$. 
Also, there is an isomorphism between 
the \v{C}ech-Sumi cohomology group $\varinjlim_{k}{H}_q(\mathcal{N}_{1, k})$ and 
the \v{C}ech cohomology group $\check{H}^q(J)$ for every $q \geq 0$. 
\end{thm}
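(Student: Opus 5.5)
Theorem~\ref{th:cechsumi} asserts that Čech homology (cohomology) of the limit set $J$ is the inverse (direct) limit of the homology (cohomology) of the nerves $\mathcal N_{1,k}$. I would prove this by showing that the coverings
\[
\mathfrak U_k=\{\varphi_v(J_k)\}_{v\in I^{(1)}\times\cdots\times I^{(k-1)}}
\]
of $J$ form a cofinal family for the Čech limit. The Čech groups can equivalently be computed from finite closed coverings of a compact metric space, so these closed coverings are admissible. The nerve of $\mathfrak U_k$ is exactly $\mathcal N_{1,k}$ by Definition~\ref{nerve} and Lemma~\ref{lem:Huchinson}. The map $\phi_{1,k}$ is a refinement projection, since $\varphi_{(v,i)}(J_{k+1})\subset\varphi_v(J_k)$. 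Hence the maps $(\phi_{1,k})_*$ are exactly the Čech bonding maps, and the theorem will follow once cofinality is established.

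To get cofinality I would pass from closed sets to open ones by thickening. By uniform contraction with constant $\lambda<1$, we have $\operatorname{diam}\varphi_v(J_k)\le\lambda^{k-1}\operatorname{diam}X\to0$. For each $k$, choose $\varepsilon_k>0$ smaller than half the minimum distance between pairs of pieces of $\mathfrak U_k$ that are disjoint; this minimum is positive since there are finitely many compact pieces. Then the open $\varepsilon_k$-neighborhoods $\mathfrak V_k$ (intersected with $J$) have the same nerve $\mathcal N_{1,k}$. The argument then runs in three steps. First, any finite open cover $\mathfrak W$ of $J$ has a Lebesgue number $\delta$, so for $k$ large with $\lambda^{k-1}\operatorname{diam}X+2\varepsilon_k<\delta$ the cover $\mathfrak V_k$ refines $\mathfrak W$. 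Second, the thickening does not change nerves, and the refinement projections $\mathfrak V_{k+1}\to\mathfrak V_k$ induce $\phi_{1,k}$; for this I would choose $\varepsilon_{k+1}\le\varepsilon_k$ small enough that containment persists. Third, any two refinement projections between covers are contiguous, so the induced homology maps are canonical. Since $\{\mathfrak V_k\}$ is a cofinal chain in the directed set of finite open covers, the inverse limit over all covers equals the inverse limit over this sequence. The same argument with the direct limit gives the cohomology statement.

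The main obstacle is the second step: making the open thickenings simultaneously preserve the nerve and stay nested under refinement, since $\varepsilon$-neighborhoods of nested sets are nested only with compatible radii. My first approach would be to pick $\varepsilon_k$ decreasing and at most $\varepsilon_{k-1}$, and to use that every piece of $\mathfrak U_{k+1}$ is contained in a piece of $\mathfrak U_k$. This gives $N_{\varepsilon_{k+1}}(\varphi_{(v,i)}(J_{k+1}))\subset N_{\varepsilon_k}(\varphi_v(J_k))$. Should that fail, the fallback is to work with closed covers directly, appealing to the standard fact that Čech homology of compacta can be computed from finite closed covers whose mesh tends to zero.
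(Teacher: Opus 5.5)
The paper does not prove this statement; it quotes it from \cite[Theorem 4.6]{NW}, so your argument has to stand on its own. Your overall strategy is the standard one and does work: the covers $\{\varphi_v(J_k)\}$, thickened to open covers with the same nerves, form a cofinal chain, and $\phi_{1,k}$ is a refinement projection. However, the step asserting that the thickened covers have nerve $\mathcal N_{1,k}$ fails as you state it.

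Choosing $\varepsilon_k$ below half the minimal distance between \emph{disjoint pairs} of pieces controls only the $1$-skeleton. A family of pieces can meet pairwise while having empty total intersection, and then the thickenings can acquire a common point, creating a higher simplex that is not in $\mathcal N_{1,k}$.

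Concretely, take the gasket system $\{\psi_1,\psi_2,\psi_3\}$ (i.e.\ $\Psi_c$ with $c=1$) and $k=2$. The three pieces $\psi_i(\mathrm{SG})$ meet pairwise in single points but have empty triple intersection, so $\mathcal N_{1,2}$ is the boundary of a triangle and $H_1\cong\mathbb Z$. There are no disjoint pairs, so your criterion puts no restriction on $\varepsilon_2$. For $\varepsilon_2>\operatorname{diam}J$ all three thickenings equal $J$, and the nerve becomes a full $2$-simplex with $H_1=0$.

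The fix is a compactness argument over all subfamilies. For each of the finitely many $S\subset I^{(1)}\times\cdots\times I^{(k-1)}$ with $\bigcap_{v\in S}\varphi_v(J_k)=\emptyset$, the continuous function $x\mapsto\max_{v\in S}d(x,\varphi_v(J_k))$ is positive on the compact set $X$, hence bounded below by some $\eta_S>0$. Taking $\varepsilon_k<\min_S\eta_S$ forces $\bigcap_{v\in S}\bigl(N_{\varepsilon_k}(\varphi_v(J_k))\cap J\bigr)=\emptyset$, while nonempty intersections obviously survive thickening.

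Two smaller points:
\begin{itemize}
\item You must impose $\varepsilon_k\to0$ explicitly (e.g.\ $\varepsilon_k\le 2^{-k}$). ``Decreasing'' alone does not give $\lambda^{k-1}\operatorname{diam}X+2\varepsilon_k<\delta$ for large $k$, which your Lebesgue-number step needs.
\item If distinct words can give the same piece, the nerve of the cover viewed as a collection of sets is $\mathcal N_{1,k}$ with those vertices identified. That identification is a homotopy equivalence (by contiguity) compatible with the bonding maps, so you should work with indexed covers.
\end{itemize}
The nestedness you flag as the main obstacle is actually immediate: $A\subset B$ and $\varepsilon'\le\varepsilon$ give $N_{\varepsilon'}(A)\subset N_{\varepsilon}(B)$. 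With these repairs the cofinal-chain argument yields both the homology and the cohomology statements.
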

\section{Proof}
In this section we give proofs of main results.
Recall that $v_0=(0, 0, 0), v_1=(0, 0, 1), v_2=(0, 1, 1),$ $v_3=(1, 1, 1)$ and for each $i \in \{0,1,2,3\}$, a map $f_i: \mathbb R^3\rightarrow \mathbb R^3$ is defined by $f_i(x)=(x+v_i)/2$.
\subsection{Preliminary lemmas}
Let $c\in [0, 1]$ and let $(a_j(c))_{j=1}^{\infty}$ be its binary  expansion defined in Definition~\ref{bi}. For each $j\in \mathbb N$ define \[A_c^{(j)}=
 \begin{cases}
 \{0\}&(a_j(c)=0)\\
 \{1,2,3\}&(a_j(c)=1),
 \end{cases}
 \]
 and consider a non-autonomous IFS $\Phi_c=(\Phi_c^{(j)})_{j=1}^{\infty}$ which is defined by $\Phi_c^{(j)}=\{f_i\}_{i\in A_c^{(j)}},\ j\geq 1.$  Let $\Pi_{\Phi_c}$ be the coding map of $\Phi_c.$ 
 \begin{lem}
\label{coding}
For any $\omega=\omega_1\omega_2\cdots\in \{0,1,2,3\}^{\infty},$ we have 
$\displaystyle{\Pi_{\Phi_c}(\omega)=\sum_{j=1}^{\infty}\frac{v_{\omega_j}}{2^j}.}$
\end{lem}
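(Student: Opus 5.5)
The plan is to compute the composition $f_{\omega_1}\circ\cdots\circ f_{\omega_n}$ explicitly and then pass to the limit in the nested intersection that defines $\Pi_{\Phi_c}$ in Definition~\ref{NIFS}. The statement is meant for words $\omega$ admissible for $\Phi_c$, i.e.\ $\omega_j\in A_c^{(j)}$ for every $j$. The formula itself does not depend on $c$, since every $f_i$ has the same form $x\mapsto (x+v_i)/2$.

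First, by induction on $n$, I will show that for every $x\in\mathbb R^3$
\[
f_{\omega_1}\circ\cdots\circ f_{\omega_n}(x)=\sum_{j=1}^{n}\frac{v_{\omega_j}}{2^j}+\frac{x}{2^n}.
\]
The case $n=1$ is the definition of $f_{\omega_1}$. For the inductive step, write $f_{\omega_1}\circ\cdots\circ f_{\omega_{n+1}}(x)=f_{\omega_1}\circ\cdots\circ f_{\omega_n}(y)$ with $y=(x+v_{\omega_{n+1}})/2$, and apply the inductive hypothesis to $y$. This gives the extra term $v_{\omega_{n+1}}/2^{n+1}$ and the remainder $x/2^{n+1}$.

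Next I fix a compact metric space $X$ on which the system acts. A natural choice is the closed convex hull of $\{v_0,\dots,v_3\}$, which each $f_i$ maps into itself; alternatively one can take a large closed ball. By the formula above, the set $f_{\omega_1}\circ\cdots\circ f_{\omega_n}(X)$ lies within distance $\operatorname{diam}(X)/2^n$ of the point $s_n:=\sum_{j=1}^n v_{\omega_j}/2^j$. The partial sums $s_n$ converge to $s:=\sum_{j=1}^\infty v_{\omega_j}/2^j$, since the tail after step $n$ is bounded by $\max_i|v_i|/2^n$. Hence $s$ is within distance $(\operatorname{diam}(X)+\max_i|v_i|)/2^n$ of the $n$th set.

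To conclude that $s$ actually lies in every $f_{\omega_1}\circ\cdots\circ f_{\omega_n}(X)$, I will note that
\[
s=f_{\omega_1}\circ\cdots\circ f_{\omega_n}\Bigl(\sum_{j\ge1}\frac{v_{\omega_{n+j}}}{2^j}\Bigr),
\]
and that the inner sum is a convex combination of the vertices, so it lies in $X$. Since the intersection in Definition~\ref{NIFS} is a singleton, it must equal $\{s\}$.

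No step here is a real obstacle. The only point that needs care is this last membership claim, which is handled by choosing $X$ convex and containing the vertices.
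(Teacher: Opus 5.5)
Your proof is correct and is essentially the paper's argument: the same induction giving $f_{\omega_1}\circ\cdots\circ f_{\omega_n}(x)=x/2^n+\sum_{j=1}^n v_{\omega_j}/2^j$, followed by passage to the limit. The only difference is in the last step. The paper identifies $\Pi_{\Phi_c}(\omega)$ with $\lim_n f_{\omega|_n}(0)$ by appeal to the definition of the coding map, whereas you verify directly, via the tail identity and a convex invariant $X$, that the series lies in every set $f_{\omega_1}\circ\cdots\circ f_{\omega_n}(X)$; this makes your intermediate distance estimate unnecessary, but it is slightly more self-contained about the choice of $X$.
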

\begin{proof}
Take $\omega=\omega_1\omega_2\cdots\in \{0,1,2,3\}^{\infty}$. We first prove by induction on $n$ that for any $n\ge 1$,
\begin{equation}\label{compo}
f_{\omega|_n}(x)=\frac{x}{2^n}+\sum_{j=1}^n \frac{v_{\omega_j}}{2^j}\ \text{for any}\ x\in\mathbb{R}^3,
\end{equation}
where $\omega|_n=\omega_1\cdots\omega_n.$
For $n=1$, since $f_{\omega_1}(x)=(x+v_{\omega_1})/2$, we have
\[
f_{\omega|_1}(x)=\frac{x}{2}+\frac{v_{\omega_1}}{2},
\]
and \eqref{compo} holds.

Assume \eqref{compo} holds for some $n\ge 1$.
Then
\[f_{\omega|_{n+1}}(x)
=f_{\omega|_n}\bigl(f_{\omega_{n+1}}(x)\bigr)
=f_{\omega|_n}\Bigl(\frac{x+v_{\omega_{n+1}}}{2}\Bigr).\]
Applying the induction hypothesis with $y=(x+v_{\omega_{n+1}})/2$ gives
\[f_{\omega|_{n+1}}(x)
=\frac{1}{2^n} \frac{x+v_{\omega_{n+1}}}{2}
+\sum_{j=1}^n \frac{v_{\omega_j}}{2^j}
=\frac{x}{2^{n+1}}+\sum_{j=1}^n \frac{v_{\omega_j}}{2^j}+\frac{v_{\omega_{n+1}}}{2^{n+1}},\]
which is \eqref{compo} for $n+1$.
Hence, \eqref{compo} holds for every $n$.

By the definition of the coding map,
\[\Pi_{\Phi_c}(\omega)=\lim_{n\to\infty} f_{\omega|_n}(x_0),\]
for any choice of base point $x_0\in\mathbb{R}^3$.
Taking $x_0=0$ in \eqref{compo} yields
\[f_{\omega|_n}(0)=\sum_{j=1}^n \frac{v_{\omega_j}}{2^j}.\]
Letting $n\to\infty$ gives
\[\Pi_{\Phi_c}(\omega)=\sum_{j=1}^\infty\frac{v_{\omega_j}}{2^j},\]
which completes the proof.
\end{proof}
The following lemma states that each slice of the Sierpi\'nski tetrahedron is regarded as the limit set of a NIFS.
\begin{lem}
\label{SliceNIFS}
For any $c\in [0, 1]$ the set $J_c$ is the limit set of the NIFS $\Phi_c.$
In particular,
if $c$ is a dyadic rational with $a(c)=a_1\cdots a_n0\overline{1}$, then 
\[
J_c=\bigcup_{\omega_1\cdots \omega_{n+1}\in \prod_{j=1}^n A_c^{(j)}\times \{0\}}f_{\omega_1\cdots \omega_{n+1}}({\rm SG}),\]where ${\rm SG}$ denotes the limit set of the IFS $\{f_1, f_2, f_3\}.$

\end{lem}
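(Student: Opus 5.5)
The plan is to use the symbolic description of $J$ together with Lemma~\ref{coding}. Since $J$ is the attractor of $\{f_0,\dots,f_3\}$, every point of $J$ has the form $x=\sum_{j\ge1} v_{\omega_j}/2^j$ for some $\omega\in\{0,1,2,3\}^{\mathbb N}$. Conversely, every such sum lies in $J$. Now read off the third coordinate. The $z$-coordinate of $v_0$ is $0$ and that of $v_1,v_2,v_3$ is $1$. Hence
\[
p_z(x)=\sum_{j\ge1}\frac{\varepsilon_j}{2^j},\qquad \varepsilon_j=\begin{cases}0&(\omega_j=0)\\ 1&(\omega_j\in\{1,2,3\}).\end{cases}
\]
So $x\in J_c$ if and only if $(\varepsilon_j)$ is a binary expansion of $c$.

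For the inclusion $\Pi_{\Phi_c}(\prod_j A_c^{(j)})\subset J_c$: if $\omega_j\in A_c^{(j)}$ for all $j$, then $\varepsilon_j=a_j(c)$. Lemma~\ref{coding} then gives $p_z(\Pi_{\Phi_c}(\omega))=c$, and the point lies in $J$, so it lies in $J_c$.

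For the reverse inclusion, take $x\in J_c$ and write $x=\sum v_{\omega_j}/2^j$. The sequence $(\varepsilon_j)$ is a binary expansion of $c$. If $c$ is not a dyadic rational, or if $c=0$, that expansion is unique. It therefore equals $(a_j(c))$, so $\omega_j\in A_c^{(j)}$ for every $j$ and $x\in \Pi_{\Phi_c}(\prod_j A_c^{(j)})$.

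The main obstacle is the dyadic case $c\neq0$, where $(\varepsilon_j)$ may instead be the terminating expansion $a_1\cdots a_n 1\overline{0}$. The preferred expansion is $a_1\cdots a_n0\overline{1}$. In this case $\omega_j=0$ for $j>n+1$ and $\omega_{n+1}\in\{1,2,3\}$, and we must re-code $x$ with a sequence whose symbols lie in the sets $A_c^{(j)}$. This uses the geometry of the vertices. For $i\in\{1,2,3\}$ we want $v_i/2=\sum_{k\ge2} v_{\eta_k}/2^{k}$ with all $\eta_k\in\{1,2,3\}$, i.e.\ $v_i=\sum_{k\ge1} v_{\eta_{k+1}}/2^{k}$ with $\eta_{k+1}=i$. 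Indeed $\sum_{k\ge1}v_i/2^k=v_i$. Combined with $v_0=0$ at position $n+1$, this shows $x=\sum_{j\le n}v_{\omega_j}/2^j+0+\sum_{j\ge n+2} v_i/2^j$. So $x$ is the image of the admissible word $\omega_1\cdots\omega_n\,0\,\overline{i}$, whose symbols lie in $A_c^{(j)}$.

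For the ``in particular'' statement, apply Lemma~\ref{lem:Huchinson} with $j=1$ and $k=n+2$. We get $J_c=\bigcup f_{\omega_1\cdots\omega_{n+1}}(J_{n+2})$, the union running over $\prod_{j\le n}A_c^{(j)}\times\{0\}$. Here $J_{n+2}$ is the limit set of the constant system $\{f_1,f_2,f_3\}$, which is ${\rm SG}$, because the coding maps of the autonomous IFS and of this NIFS agree.
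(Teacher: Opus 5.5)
Your proof is correct and follows essentially the paper's argument. You read off the $z$-coordinate of $\sum_j v_{\omega_j}/2^j$, use uniqueness of the binary expansion in the non-dyadic case (you also cover $c=0$ explicitly, which the paper leaves implicit), and in the dyadic case re-code the points whose $z$-digits are $a_1\cdots a_n1\overline{0}$ via $v_i/2=f_0(v_i)$, where $v_i$ is the fixed point of $f_i$. The only difference is the order of the steps: the paper first writes $J_c$ as a union of the points $f_{\omega_1\cdots\omega_{n+1}}(\boldsymbol{o})$ and copies of ${\rm SG}$ and absorbs the former into the latter, whereas you prove the limit-set identity pointwise and then get the decomposition from Lemma~\ref{lem:Huchinson} with $k=n+2$.
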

\begin{proof}
Although this lemma follows immediately from \cite{Nak22},
we include the argument for completeness.
By Lemma~\ref{coding} we have
\begin{equation*}
\begin{split}
J_c&=\{x\in J\colon p_z(x)=c\}\\&=\{\Pi_{\Phi_c}(\omega)\colon p_z(\Pi_{\Phi_c}(\omega))=c\ \mbox{for}\ \omega\in \{0, 1,2,3\}^{\infty}\}\\&=\left\{\Pi_{\Phi_c}(\omega)\ :\ p_z\left(\sum_{j=1}^{\infty}(1/2)^{j}v_{\omega_{j}}\right)=c\ \mbox{for}\ \omega\in \{0,1,2,3\}^{\infty}\right\}\\&=\left\{\Pi_{\Phi_c}(\omega)\colon \sum_{j=1}^{\infty}(1/2)^{j}p_z(v_{\omega_{j}})=c\ \mbox{for}\ \omega\in \{0,1,2,3\}^{\infty}\right\}.
\end{split}
\end{equation*}
If $c$ is a non-dyadic rational with its binary expansion $(a_j(c))$, 
by $z_j:=p_z(v_{\omega_{j}})\in\{0,1\}$ and the uniqueness of the binary expansion, we have that $z_j=a_j(c)$ for any $j\in \mathbb N$. Hence, $J_c$ coincides with \[\left\{\Pi_{\Phi_c}(\omega)\colon \omega_j\in A_c^{(j)}\ \text{for any}\ j\in \mathbb N\right\},\] which is the limit set of the NIFS $\Phi_c.$

Let $c\in [0,1]$ be a dyadic rational with its binary expansion $a_1\cdots a_n0\overline{1}$.
Then we have 
\begin{align*}
J_c=\left\{\Pi_{\Phi_c}(\omega)\colon \sum_{j=1}^{\infty}(1/2)^{j}p_z(v_{\omega_{j}})=c\ \mbox{for}\ \omega\in \{0,1,2,3\}^{\infty}\right\}.
\end{align*}
Since $z_j:=p_z(v_{\omega_{j}})\in\{0,1\}$ we have that $z_1z_2\cdots z_n\cdots=a_1a_2\cdots a_n1\overline{0}$ or $z_1z_2\cdots z_n\cdots=a_1a_2\cdots a_n0\overline{1}$.
Hence, we have 
\[
J_c=\bigcup_{\omega_1\cdots \omega_{n+1}\in \prod_{j=1}^n A_c^{(j)}\times \{1, 2, 3\}}f_{\omega_1\cdots \omega_{n+1}}(\boldsymbol{o})\cup \bigcup_{\omega_1\cdots \omega_{n+1}\in \prod_{j=1}^n A_c^{(j)}\times \{0\}}f_{\omega_1\cdots \omega_{n+1}}({\rm SG}),\]
where $\boldsymbol{o}$ denotes the origin $(0,0,0)$ and ${\rm SG}$ denotes the limit set of IFS $\{f_1, f_2, f_3\}$. 
Take $\omega_1\cdots\omega_{n+1}\in \prod_{j=1}^n A_c^{(j)}\times \{1, 2, 3\}$ and set $f_{\omega_{n+1}}(x)=(x+v_{\omega_{n+1}})/2.$ Since $f_{\omega_{n+1}}(\boldsymbol{o})=v_{\omega_{n+1}}/2$ and $v_{\omega_{n+1}}$ is a fixed point of $f_{\omega_{n+1}}$ belonging to the limit set ${\rm SG}$ of $\{f_1, f_2, f_3\}$, we have $f_{\omega_1\cdots \omega_{n+1}}(\boldsymbol{o})=f_{\omega_1}\circ\cdots\circ f_{\omega_n}(v_{\omega_{n+1}}/2)=f_{\omega_1}\circ\cdots\circ f_{\omega_n}(f_0(v_{\omega_{n+1}}))\in f_{\omega_1}\circ\cdots\circ f_{\omega_n}\circ f_0({\rm SG}).$ Hence, we have 
\[
J_c=\bigcup_{\omega_1\cdots \omega_{n+1}\in \prod_{j=1}^n A_c^{(j)}\times \{0\}}f_{\omega_1\cdots \omega_{n+1}}({\rm SG}),\]which is the limit set of the NIFS $(\Phi_c^{(j)})_{j=1}^{\infty}.$
\end{proof}
Let $X=[0, 1]^2$ be the unit square in $\mathbb R^2$. Let $b_1=(0, 0), b_2=(0, 1),$ and $b_3=(1, 1)$. 
For each $i \in \{1,2,3\}$, define a contracting map $\psi_i: X\rightarrow X$ by $\psi_i(x)=(x+b_i)/2$. 

Let $c\in [0, 1]$ and let $(a_j(c))_{j=1}^{\infty}$ be its binary expansion defined in Definition~\ref{bi}. For each $j\in \mathbb N$ define \[I_c^{(j)}:=
 \begin{cases}
 \{1\}&(a_j(c)=0)\\
 \{1, 2, 3\}&(a_j(c)=1)
 \end{cases}
 \]
 and consider a NIFS $\Psi_c=(\Psi_c^{(j)})_{j=1}^{\infty}, \Psi_c^{(j)}=\{\psi_{i}\}_{i\in I^{(j)}}, j\geq 1.$
 Let $\Pi_{\Psi_c}$ be the coding map of $\Psi_c.$ 
\begin{lem}
\label{coding2}
For any $\omega=\omega_1\omega_2\cdots\in \{1,2,3\}^{\infty},$ we have 
$\displaystyle{\Pi_{\Psi_c}(\omega)=\sum_{j=1}^{\infty}\frac{b_{\omega_j}}{2^j}.}$
\end{lem}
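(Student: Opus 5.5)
The argument follows the proof of Lemma~\ref{coding} line by line, with $f_i$ replaced by $\psi_i$, the vertices $v_i$ by $b_i$, and $\mathbb R^3$ by the square $X=[0,1]^2$. Fix $\omega=\omega_1\omega_2\cdots\in\{1,2,3\}^{\infty}$ and write $\psi_{\omega|_n}=\psi_{\omega_1}\circ\cdots\circ\psi_{\omega_n}$. We first prove by induction on $n$ that
\[
\psi_{\omega|_n}(x)=\frac{x}{2^n}+\sum_{j=1}^n\frac{b_{\omega_j}}{2^j}\quad\text{for all } x\in X .
\]
For $n=1$ this is the definition of $\psi_{\omega_1}$. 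For the inductive step, write $\psi_{\omega|_{n+1}}(x)=\psi_{\omega|_n}\bigl((x+b_{\omega_{n+1}})/2\bigr)$ and apply the hypothesis to the point $y=(x+b_{\omega_{n+1}})/2$. This point lies in $X$, because $X$ is convex and contains both $x$ and $b_{\omega_{n+1}}$, so each $\psi_i$ maps $X$ into $X$ and the compositions are well defined.

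Next we identify the coding map. By the uniform contraction, $\operatorname{diam}\psi_{\omega|_n}(X)=2^{-n}\operatorname{diam}X\to0$. The sets $\psi_{\omega|_n}(X)$ are nested, since $\psi_{\omega_{n+1}}(X)\subset X$, and each contains the point $\psi_{\omega|_n}(x_0)$ for any $x_0\in X$. Hence the unique point of $\bigcap_n\psi_{\omega|_n}(X)$ equals $\lim_{n\to\infty}\psi_{\omega|_n}(x_0)$. Taking $x_0=(0,0)=b_1\in X$ in the displayed formula gives $\psi_{\omega|_n}(0)=\sum_{j=1}^n b_{\omega_j}/2^j$, and letting $n\to\infty$ yields the claimed formula. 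The series converges absolutely because $|b_i|\le\sqrt2$.

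Strictly speaking, $\Pi_{\Psi_c}$ is defined only on $\prod_j I_c^{(j)}$. The computation above, however, never uses the constraint $\omega_j\in I_c^{(j)}$, so the formula holds for every $\omega\in\{1,2,3\}^\infty$; this matches the statement, as in Lemma~\ref{coding}. I expect no real obstacle here. The only points that need care beyond Lemma~\ref{coding} are choosing a base point inside $X$ (the origin works) and checking $\psi_i(X)\subset X$, since the maps are now defined on $X$ rather than on all of $\mathbb R^2$.
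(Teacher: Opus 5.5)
Your proposal is correct and matches the paper's approach. The paper's proof just says it is "similar to the proof of Lemma~\ref{coding}", namely the induction formula for $\psi_{\omega|_n}$, then evaluation at a base point and a limit. Your extra checks that $\psi_i(X)\subset X$ and that the base point $(0,0)$ lies in $X$ are the small adjustments needed because the maps now act on the compact square $X$ rather than on all of $\mathbb{R}^3$.
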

\begin{proof}
The proof is similar to the proof of Lemma~\ref{coding}.
\end{proof}
Let $P$ be the projection from $\mathbb R^3$ onto $\mathbb R^2$ defined by $P(x_1,x_2,x_3)=(x_1, x_2).$ 
\begin{lem}
\label{Proj}
For any $c\in [0, 1]$, the map $P|_{J_c}$ is a homeomorphism onto the limit set of the NIFS $\Psi_c.$
\end{lem}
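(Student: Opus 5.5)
The plan is to observe that injectivity is essentially free and that all the work lies in identifying the image. Since $J_c\subset\{x\in\mathbb R^3\colon p_z(x)=c\}$, the restriction of $P$ to $J_c$ is the restriction of the map $(x_1,x_2,c)\mapsto(x_1,x_2)$. This map is an isometry from the horizontal plane at height $c$ onto $\mathbb R^2$. Hence $P|_{J_c}$ is an isometric embedding, in particular injective, continuous and a homeomorphism onto its image. (Alternatively, $J_c$ is compact and $\mathbb R^2$ is Hausdorff.) It therefore remains to show that $P(J_c)$ equals the limit set of $\Psi_c$.

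For the image, I would use the coding maps. Define $\tau\colon\{0,1,2,3\}\to\{1,2,3\}$ by $\tau(0)=\tau(1)=1$, $\tau(2)=2$, $\tau(3)=3$. Then $P(v_0)=P(v_1)=(0,0)=b_1$, $P(v_2)=b_2$ and $P(v_3)=b_3$, so $P(v_i)=b_{\tau(i)}$ for every $i$. By linearity and continuity of $P$, Lemma~\ref{coding} and Lemma~\ref{coding2} give, for $\omega\in\{0,1,2,3\}^{\infty}$,
\[
P\bigl(\Pi_{\Phi_c}(\omega)\bigr)=\sum_{j=1}^{\infty}\frac{P(v_{\omega_j})}{2^j}=\sum_{j=1}^{\infty}\frac{b_{\tau(\omega_j)}}{2^j}=\Pi_{\Psi_c}\bigl(\tau(\omega_1)\tau(\omega_2)\cdots\bigr).
\]

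By Lemma~\ref{SliceNIFS}, $J_c=\Pi_{\Phi_c}\bigl(\prod_j A_c^{(j)}\bigr)$ for every $c\in[0,1]$, including the dyadic case. Moreover, $\tau$ maps $A_c^{(j)}$ bijectively onto $I_c^{(j)}$: it sends $\{0\}$ to $\{1\}$ when $a_j(c)=0$, and $\{1,2,3\}$ identically to itself when $a_j(c)=1$. Consequently the coordinatewise map $\tau^{\mathbb N}$ sends $\prod_j A_c^{(j)}$ onto $\prod_j I_c^{(j)}$. Applying the displayed identity, we get $P(J_c)=\Pi_{\Psi_c}\bigl(\prod_j I_c^{(j)}\bigr)$, which is exactly the limit set of $\Psi_c$.

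The only point requiring care is the step that relies on Lemma~\ref{SliceNIFS}, namely that in the dyadic case $J_c$ is genuinely the full limit set of $\Phi_c$ rather than a union with extra points. That lemma asserts exactly this, so I expect no real obstacle. As a sanity check in the dyadic case, the identity also intertwines the decompositions: $P\circ f_i=\psi_{\tau(i)}\circ P$ on $\mathbb R^3$, so $P\bigl(f_{\omega_1\cdots\omega_{n+1}}({\rm SG})\bigr)=\psi_{\tau(\omega_1)\cdots\tau(\omega_{n+1})}\bigl(P({\rm SG})\bigr)$, where $P({\rm SG})$ is the limit set of $\{\psi_1,\psi_2,\psi_3\}$.
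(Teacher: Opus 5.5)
Your proof is correct and follows the paper's argument: like the paper, you use Lemma~\ref{SliceNIFS} to write $J_c$ as the limit set of $\Phi_c$, push the coding formula of Lemma~\ref{coding} through the linear map $P$, and match the result with Lemma~\ref{coding2}. Your explicit relabeling $\tau$ (sending the symbol $0$ to $1$) and your isometry argument for injectivity spell out two points the paper's proof leaves implicit, namely the symbol change $A_c^{(j)}\to I_c^{(j)}$ and the homeomorphism part of the claim.
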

\begin{proof}
By Lemma~\ref{SliceNIFS} for any $c\in [0, 1]$ the set $J_c$ is the limit set of the NIFS $(\Phi_c^{(j)})_{j=1}^{\infty}.$ 
By Lemma~\ref{coding} and Lemma~\ref{coding2} we have
\begin{equation*}
\begin{split}
P(J_c)&=P\left(\left\{\sum_{j=1}^{\infty}\frac{v_{\omega_j}}{2^j}\colon \omega_j\in A_c^{(j)}\ \text{for all}\ j\geq 1\right\}\right)\\
&=\left\{\sum_{j=1}^{\infty}\frac{P(v_{\omega_j})}{2^j}\colon \omega_j\in A_c^{(j)}\ \text{for all}\ j\geq 1\right\}=\left\{\sum_{j=1}^{\infty}\frac{b_{\omega_j}}{2^j}\colon \omega_j\in I_c^{(j)}\ \text{for all}\ j\geq 1\right\},
\end{split}
\end{equation*}
which is the limit set of $\Psi_c. $
\end{proof}
 By Lemma~\ref{Proj}, for each $c$ we have
$P(J_c)=\{x-(0,0,c)\in \mathbb R^3\colon x\in J_c\}.$
Hence, in order to study the slice $J_c$ at height $c$, it suffices to investigate its projection $P(J_c)$.
In what follows, we simply write $J_c$ and $\Pi_c$ instead of $P(J_c)$ and $\Pi_{\Psi_c}$, respectively.

\begin{lem}
\label{Split}
Let $1\leq j< k$. Then for any distinct $\omega=\omega_j\cdots \omega_{k-1}, \tau=\tau_j\cdots \tau_{k-1}\in \{1, 2, 3\}^{k-j}$, we have $\psi_{\omega1}(X)\cap \psi_{\tau1}(X)=\emptyset.$
\end{lem}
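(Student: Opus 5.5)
The plan is to compute the squares $\psi_{\omega 1}(X)$ and $\psi_{\tau 1}(X)$ explicitly and show they are small corner squares sitting inside distinct cells of the dyadic grid of level $m := k-j$. Since the maps $\psi_i$ are the planar analogues of the $f_i$, the induction in the proof of Lemma~\ref{coding} applies verbatim: for any word $\omega=\omega_j\cdots\omega_{k-1}$ and any $x\in\mathbb R^2$,
\[
\psi_{\omega}(x)=\frac{x}{2^{m}}+p_\omega, \qquad p_\omega:=\sum_{i=1}^{m}\frac{b_{\omega_{j-1+i}}}{2^{i}}.
\]
Because $b_1=(0,0)$, we get $\psi_{\omega 1}(x)=\psi_\omega(x/2)=x/2^{m+1}+p_\omega$. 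Hence
\[
\psi_{\omega1}(X)=p_\omega+[0,2^{-(m+1)}]^2 .
\]

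The next step is to show that $\omega\mapsto p_\omega$ is injective on $\{1,2,3\}^m$. The vectors $b_1=(0,0)$, $b_2=(0,1)$, $b_3=(1,1)$ are distinct elements of $\{0,1\}^2$. Consequently the first and second coordinates of $p_\omega$ are the finite binary numbers $0.\epsilon_1\cdots\epsilon_m$ and $0.\delta_1\cdots\delta_m$, where $(\epsilon_i,\delta_i)=b_{\omega_{j-1+i}}$. By uniqueness of finite binary expansions of length $m$ (equivalently, of the integers $2^m p_\omega\in\{0,\dots,2^m-1\}^2$), $p_\omega$ determines every pair $(\epsilon_i,\delta_i)$, and hence every letter $\omega_{j-1+i}$. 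So $\omega\neq\tau$ implies $p_\omega\neq p_\tau$.

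Finally, both $p_\omega$ and $p_\tau$ lie in $2^{-m}\mathbb Z^2$. If they are distinct, then in some coordinate, say the first, they differ by at least $2^{-m}$. Assume $(p_\omega)_1\le (p_\tau)_1-2^{-m}$. Every point of $\psi_{\omega1}(X)$ has first coordinate at most $(p_\omega)_1+2^{-(m+1)}<(p_\tau)_1$. Every point of $\psi_{\tau1}(X)$ has first coordinate at least $(p_\tau)_1$. The two squares are therefore disjoint, with a gap of at least $2^{-(m+1)}$.

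I do not expect a real obstacle. The only point needing care is the injectivity step: it relies on the $b_i$ being distinct $0$--$1$ vectors, so that digits can be read off coordinatewise. The appended letter $1$, which gives the translate $b_1=0$ and the halved side length, is exactly what turns the merely adjacent squares $\psi_\omega(X)$ into pairwise disjoint ones.
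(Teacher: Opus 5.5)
Your proof is correct and takes the same route as the paper. Both arguments identify $\psi_{\omega1}(X)$ as the lower-left quarter of the level-$(k-j)$ dyadic square $\psi_\omega(X)$, and both deduce disjointness from the fact that these squares are distinct. Your version is more complete: the paper only asserts that distinct words give distinct squares and that lower-left quarters of such squares are disjoint, whereas you prove injectivity of $\omega\mapsto p_\omega$ via finite binary expansions and use $p_\omega\in 2^{-m}\mathbb{Z}^2$ to obtain an explicit coordinate separation.
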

\begin{proof}
For each word $\omega\in\{1,2,3\}^{k-j}$, the set $\psi_\omega(X)$ is a square of side length $2^{-(k-j)}$.
Then we have
\[\psi_{\omega 1}(X)=\psi_\omega(\psi_1(X))=\psi_\omega\bigl([0,1/2]^2\bigr),\]
which is exactly the lower-left sub-square obtained by dividing
$\psi_\omega(X)$ into four congruent squares.
Hence, it has side length $2^{-(k-j+1)}$
and lies in the lower-left quarter of $\psi_\omega(X)$.

Since $\omega\neq \tau$, squares $\psi_\omega(X)$ and $\psi_\tau(X)$ are distinct and their interiors are disjoint. In particular, their lower-left quarters
are disjoint. Therefore, $\psi_{\omega1}(X)\cap \psi_{\tau1}(X)=\emptyset.$
\end{proof}
\begin{pro}
\label{Dissconne}
If $c$ is a non-dyadic rational, then $J_c$ is totally disconnected.
\end{pro}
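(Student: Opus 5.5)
We work with the planar model: by Lemma~\ref{Proj} it suffices to show that the limit set $J_c$ of the NIFS $\Psi_c$ is totally disconnected. The idea is to exhibit, for infinitely many levels $k$, a partition of $J_c$ into finitely many pairwise disjoint compact pieces of diameter at most $\sqrt{2}\,2^{-(k-1)}$. Such partitions exist at every level at which the binary expansion of $c$ has a $0$ digit. Since $c$ is non-dyadic, $a_k(c)=0$ for infinitely many $k$, so arbitrarily fine partitions of this kind exist. This forces every connected subset of $J_c$ to be a single point.

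\textbf{Step 1 (disjoint level pieces).} Fix $k\ge 2$ with $a_k(c)=0$, so that $I_c^{(k)}=\{1\}$. By Lemma~\ref{lem:Huchinson} (with $j=1$, and $k+1$ in place of $k$),
\[
J_c=\bigcup_{\omega\in \prod_{j=1}^{k-1} I_c^{(j)}} \psi_{\omega}\circ\psi_1(J_{k+1}),
\]
where $J_{k+1}\subset X$ is the limit set of the shifted NIFS. Each piece $\psi_{\omega 1}(J_{k+1})$ is contained in $\psi_{\omega1}(X)$. The words $\omega$ lie in $\{1,2,3\}^{k-1}$, so Lemma~\ref{Split} (applied with $j=1$ and $k$) shows that distinct words give disjoint squares. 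Hence $J_c$ is a finite disjoint union of compact sets $K_\omega:=\psi_{\omega1}(J_{k+1})$, each of diameter at most $\operatorname{diam}\psi_{\omega1}(X)=\sqrt2\,2^{-k}$. Being finitely many disjoint compact sets, each $K_\omega$ is clopen in $J_c$.

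\textbf{Step 2 (conclusion).} Let $C\subset J_c$ be connected and suppose, for contradiction, that it contains two points $x\ne y$. Because $c$ is not dyadic, the digit $0$ occurs infinitely often in its expansion. We may therefore choose $k$ with $a_k(c)=0$ and $\sqrt2\,2^{-k}<|x-y|$. The sets $K_\omega\cap C$ form a partition of $C$ into finitely many relatively clopen sets. Since $C$ is connected, $C$ lies inside a single $K_\omega$, and so $|x-y|\le \sqrt2\,2^{-k}$, a contradiction. Hence every component of $J_c$ is a singleton, and via the homeomorphism $P|_{J_c}$ the same holds for the original slice.

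\textbf{Main obstacle.} The only delicate point is the bookkeeping with indices. We must make sure that the cover given by Lemma~\ref{lem:Huchinson} at level $k+1$ has exactly the form $\psi_{\omega1}(\cdot)$ with $\omega$ of length $k-1$, and that Lemma~\ref{Split} applies in the case $j=1$ to words ranging only over the subset $\prod_{j<k} I_c^{(j)}\subset\{1,2,3\}^{k-1}$. The case $k=1$, that is $a_1(c)=0$, is harmless, since it only yields a single piece. It is also worth checking that non-dyadicity, rather than mere irrationality, is what guarantees infinitely many zero digits. This holds because the chosen expansion of a dyadic $c$ ends in $\overline{1}$, whereas a non-dyadic $c$ has infinitely many $0$ digits, as remarked after Definition~\ref{bi}.
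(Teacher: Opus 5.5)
Your proof is correct and is essentially the paper's argument. At each level $k$ with $a_k(c)=0$, Lemma~\ref{Split} splits $J_c$ into finitely many pairwise disjoint pieces of diameter at most $\sqrt2\,2^{-k}$, and your clopen-partition step makes explicit what the paper compresses into the assertion $C_x\subset\psi_{\omega_1\cdots\omega_{n_i}}(X)$.

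One caveat, which the paper shares: the claim ``non-dyadic implies infinitely many zero digits'' fails for $c=1$ under the paper's literal definition, which requires $k\le 2^n-1$. Since the slice at height $1$ is a full Sierpi\'nski gasket, $c=1$ must be counted as dyadic for the statement to hold.
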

\begin{proof}
Since $c$ is not a dyadic rational, there exists a strictly increasing sequence $(n_i)_{i=1}^{\infty}$ such that $I^{(n_i)}_c=\{1\}$ for any $i\geq 1.$  Take $x\in J_c$, let $C_x$ denote the connected component containing $x$ and let $\omega=\omega_1\omega_2\cdots \in \prod_{j=1}^{\infty}I^{(j)}$ satisfy $\Pi_c(\omega)=x.$ By Lemma~\ref{Split}, $C_x\subset \psi_{\omega_1\cdots \omega_{n_i}}(X)$ for any $i\geq 1.$ Since $|C_x|\leq |\psi_{\omega_1\cdots \omega_{n_i}}(X)|\le 1/2^{n_i}$ and letting $i\to \infty$ we obtain $|C_x|=0,$ which completes the proof.
\end{proof}
In what follows, given the NIFS $\Psi_c$ and $n \in \mathbb{N}$, we denote by $\mathcal{N}_{1,n+1}$ the simplicial complex introduced in Definition~\ref{nerve}.
\begin{lem}
\label{ConnNum}
 Let $1\leq m< n$ and $\ell=\#\{i\in \{m,\ldots, n-1\}\colon I_c^{(i)}=\{1\}\}.$ If $I_c^{(n)}=\{1\}$, then ${H}_0(\mathcal{N}_{m,n+1})$ and ${H}^0(\mathcal{N}_{m,n+1})$ are the free abelian group of rank $3^{n-m-\ell}.$

\end{lem}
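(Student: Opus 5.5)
Since $H_0$ and $H^0$ of a finite simplicial complex are free abelian on its connected components, it suffices to show that $\mathcal N_{m,n+1}$ has exactly $3^{n-m-\ell}$ components. I will identify these components with the $1$-simplices, i.e.\ with which pieces $\psi_v(J_{n+1})$ meet.

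The vertex set of $\mathcal N_{m,n+1}$ is $I_c^{(m)}\times\cdots\times I_c^{(n)}$. Since $I_c^{(n)}=\{1\}$, each vertex has the form $v=\omega 1$ with $\omega=\omega_m\cdots\omega_{n-1}\in\prod_{i=m}^{n-1}I_c^{(i)}$. The number of such $\omega$ is $3^{(n-m)-\ell}$, because exactly $\ell$ of the $n-m$ factors are $\{1\}$ and the remaining ones are $\{1,2,3\}$. For the intersections, note that $J_{n+1}\subset X$, so $\psi_{\omega1}(J_{n+1})\subset\psi_{\omega1}(X)$. By Lemma~\ref{Split}, applied with $j=m$ and $k=n$ to distinct words $\omega\neq\tau$ in $\{1,2,3\}^{n-m}$, the sets $\psi_{\omega1}(X)$ and $\psi_{\tau1}(X)$ are disjoint. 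Hence two distinct vertices never span an edge.

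So $\mathcal N_{m,n+1}$ has no simplices of dimension $\geq 1$. It is a discrete set of vertices, all of which are genuinely present, since each $\psi_v(J_{n+1})$ is nonempty as the image of a nonempty limit set. Therefore $H_0(\mathcal N_{m,n+1})\cong\ZZ^{3^{n-m-\ell}}$, and by the universal coefficient theorem (or directly, as $\mathrm{Hom}$ of the free group of $0$-chains) $H^0(\mathcal N_{m,n+1})\cong\ZZ^{3^{n-m-\ell}}$ as well.

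The only point needing care is that Lemma~\ref{Split} is stated for words indexed $j,\dots,k-1$ with $k-j$ letters, followed by a $1$. I will apply it with $j=m$ and $k=n$, so that the words $\omega_m\cdots\omega_{n-1}$ have length $n-m$ and the last letter $1$ comes from the level-$n$ factor. I also have to check that restricting to the subproduct $\prod I_c^{(i)}\subset\{1,2,3\}^{n-m}$ is harmless, which it is because disjointness holds for all distinct words.
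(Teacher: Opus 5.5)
Your proof is correct and follows the same route as the paper: Lemma~\ref{Split}, applied with $j=m$ and $k=n$, rules out every $1$-simplex of $\mathcal N_{m,n+1}$, so the complex is discrete on its $3^{n-m-\ell}$ vertices. Your version spells out three steps that the paper's one-line proof leaves implicit: the vertex count, the inclusion $\psi_{\omega1}(J_{n+1})\subset\psi_{\omega1}(X)$, and the passage from $H_0$ to $H^0$.
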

\begin{proof}

Let $1\leq m< n$ and $\ell=\#\{i\in \{m,\ldots, n-1\}\colon I^{(i)}=\{1\}\}.$
By Lemma~\ref{Split}, there is no 1-simplex on  $\mathcal{N}_{m,n+1}$, which implies the desired one.
\end{proof}

\begin{pro}
\label{DyaConnNum}
Let $c$ be a dyadic rational with its binary expansion $a_1\cdots a_n0\overline{1}$ and $\ell=\#\{i\in \{1,\ldots, n\}\colon a_i(c)=0\}.$ Then $\check{H}_0(J_c)$ and $\check{H}^0(J_c)$ are the free abelian group of rank $3^{n-\ell}.$
\end{pro}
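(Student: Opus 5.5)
We will prove Proposition~\ref{DyaConnNum} by combining Lemma~\ref{ConnNum} with Theorem~\ref{th:cechsumi}. The point is that the inverse (resp.\ direct) system defining the \v{C}ech-Sumi group in degree $0$ stabilizes from level $n+2$ on.

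\textbf{Step 1: the base level.} Let $c$ have binary expansion $a_1\cdots a_n0\overline{1}$. Then $I_c^{(n+1)}=\{1\}$ and $I_c^{(j)}=\{1,2,3\}$ for all $j\ge n+2$. Apply Lemma~\ref{ConnNum} with $m=1$ and with $n+1$ in place of $n$. This gives that $H_0(\mathcal N_{1,n+2})$ and $H^0(\mathcal N_{1,n+2})$ are free of rank $3^{(n+1)-1-\ell}=3^{n-\ell}$, where $\ell=\#\{i\le n\colon a_i=0\}$. Geometrically, the components of $\mathcal N_{1,n+2}$ are the singletons $\{v\}$ indexed by the words $v\in\prod_{j=1}^{n+1}I_c^{(j)}$.

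\textbf{Step 2: connected components at higher levels.} For $k>n+2$ we show that $\mathcal N_{1,k}$ has exactly $3^{n-\ell}$ connected components. Each component consists of the vertices sharing a fixed prefix $v$ of length $n+1$.

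Vertices with different prefixes of length $n+1$ are never joined by an edge. Indeed, their images lie in $\psi_{v}(X)$ and $\psi_{v'}(X)$, and the last letter of each prefix is $1$. So these squares are disjoint by Lemma~\ref{Split}, applied with $j=1$ and words of length $n$.

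Vertices with the same prefix $v$ are connected. The vertices extending $v$ correspond to words $u\in\{1,2,3\}^{k-n-2}$. Two of them are adjacent iff $\psi_{vu}(J_k)\cap\psi_{vu'}(J_k)\neq\emptyset$. Since $I^{(j)}=\{1,2,3\}$ for $j\ge k$, the set $J_k$ equals the Sierpi\'nski gasket $\mathrm{SG}'$ of $\{\psi_1,\psi_2,\psi_3\}$. Hence the subcomplex is the nerve of the level-$(k-n-2)$ cell decomposition of $\mathrm{SG}'$, which is connected: it is the standard Sierpi\'nski graph. The pieces $\psi_i(\mathrm{SG}')$ meet pairwise at a single point, since $\psi_1(b_2)=\psi_2(b_1)$, and similarly for the other pairs. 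Connectivity then follows by induction on the level.

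Hence $H_0(\mathcal N_{1,k})\cong\mathbb Z^{3^{n-\ell}}$ and $H^0(\mathcal N_{1,k})\cong\mathbb Z^{3^{n-\ell}}$ for all $k\ge n+2$.

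\textbf{Step 3: passing to the limit.} The bonding map $\phi_{1,k}$ sends each vertex with prefix $v$ to a vertex with the same prefix $v$. So it maps the component labelled $v$ onto the component labelled $v$. Therefore $(\phi_{1,k})_*$ on $H_0$ is the identity matrix in the bases given by the component classes, and likewise $(\phi_{1,k})^*$ on $H^0$.

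The inverse limit and the direct limit of a system that is eventually a sequence of isomorphisms both equal the stable group, since a cofinal subsystem determines the limit. We conclude by Theorem~\ref{th:cechsumi}, using also that $P|_{J_c}$ is a homeomorphism (Lemma~\ref{Proj}) and that \v{C}ech groups are topological invariants. This gives $\check H_0(J_c)\cong\check H^0(J_c)\cong\mathbb Z^{3^{n-\ell}}$.

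The main obstacle is Step 2. One must carefully justify that the nerve of $\{\psi_u(\mathrm{SG}')\}_{|u|=r}$ is connected, i.e.\ that adjacent gasket cells actually intersect. We would handle this by the inductive junction-point argument above, or alternatively deduce it from Hata's connectivity criterion for $\mathrm{SG}'$.
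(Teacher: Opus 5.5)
Your argument is correct, but it is organized differently from the paper's.

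\textbf{The paper's route.} The paper works on the space side. By Lemmas~\ref{SliceNIFS} and~\ref{Split}, $J_c$ is a disjoint union of the $3^{n-\ell}$ compact sets $\psi_{w1}(\mathrm{SG})$, $w\in\prod_{j=1}^n I_c^{(j)}$, and each of these is connected. So $J_c$ has exactly $3^{n-\ell}$ clopen components, and the paper then simply appeals to Lemma~\ref{ConnNum}. The passage to $\check{H}_0(J_c)$ and $\check{H}^0(J_c)$ is left tacit. It rests on the standard fact that degree-$0$ \v{C}ech (co)homology of a compactum with finitely many components is free on those components.

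\textbf{Your route.} You work on the nerve side. You compute $H_0(\mathcal N_{1,k})$ for every $k\ge n+2$, show that the bonding maps are the identity on component bases, and read off the limit through Theorem~\ref{th:cechsumi}.

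\textbf{Comparison.} Your approach keeps everything inside the paper's framework. It also makes explicit the step the paper skips: why the single level computed in Lemma~\ref{ConnNum} determines the inverse and direct limits. Connectedness of $\mathrm{SG}$ enters exactly there, to rule out extra components at higher levels. The paper's route is shorter. It also produces the decomposition of $J_c$ into gaskets, which the paper combines with Sumi's results on $\mathrm{SG}$ to get the higher \v{C}ech groups in Main Theorem~A(a).

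\textbf{Two small remarks.} First, the step you flag as the main obstacle is immediate. Suppose the nerve of a finite cover of the connected set $\mathrm{SG}'$ by nonempty compact pieces split into two vertex classes with no edges between them. Then the two corresponding unions would be disjoint nonempty compact sets covering $\mathrm{SG}'$, which is impossible. So neither the junction-point induction nor Hata's criterion is needed. Second, for $n=0$ (that is, $c=1/2$), Lemmas~\ref{ConnNum} and~\ref{Split} do not literally apply. In that case there is only one prefix, and the count is trivial.
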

\begin{proof}
By Lemmas~\ref{SliceNIFS} and ~\ref{Split} we have \[
J_c=\bigcup_{\omega_1\cdots \omega_{n+1}\in \prod_{j=1}^n I_c^{(j)}\times \{0\}}\psi_{\omega_1\cdots \omega_{n+1}}({\rm SG})\ \text{(disjoint union)}, \]where SG is the limit set of $\{\psi_1, \psi_2, \psi_3\}.$
Since SG is connected, we have the desired one by Lemma~\ref{ConnNum}.

\end{proof}
\begin{pro}
\label{NotDyaConnNum}
If $c$ is a non-dyadic rational with its binary expansion $(a_j(c))_{i=1}^{\infty}$,  then we have \[\log {\rm rank}{H}_0(\mathcal N_{1, n+1})=\log {\rm rank}{H}^0(\mathcal N_{1, n+1})=\sum_{j=1}^n a_j(c)\log 3 .\] 
\end{pro}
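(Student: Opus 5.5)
The plan is to show that $\mathcal N_{1,n+1}$ has no $1$-simplices at all, for \emph{every} $n$, so that it is a discrete set of vertices. Lemma~\ref{ConnNum} already does this when $I_c^{(n)}=\{1\}$, but here $n$ is arbitrary, so we cannot apply it directly. Instead we use that $c$ is non-dyadic to find a later index where the alphabet is $\{1\}$, and push the disjointness argument of Lemma~\ref{Split} down to that level.

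\textbf{Step 1 (a later index with alphabet $\{1\}$).} Fix $n\in\mathbb N$. Since $c$ is a non-dyadic rational, $a_j(c)=0$ for infinitely many $j$. Hence there is $m>n$ with $I_c^{(m)}=\{1\}$.

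\textbf{Step 2 (refine each piece).} By Lemma~\ref{lem:Huchinson} applied to $\Psi_c$,
\[J_{n+1}=\bigcup_{u}\psi_u(J_{m+1}),\]
where $u=u_{n+1}\cdots u_m$ ranges over $I_c^{(n+1)}\times\cdots\times I_c^{(m)}$. Every such $u$ ends in the letter $1$, and $J_{m+1}\subset X$. So for each vertex $v\in I_c^{(1)}\times\cdots\times I_c^{(n)}$ of $\mathcal N_{1,n+1}$,
\[\psi_v(J_{n+1})\subset\bigcup_u \psi_{vu}(X),\]
and each $vu$ has the form $w1$ with $w\in\{1,2,3\}^{m-1}$.

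\textbf{Step 3 (distinct vertices give disjoint pieces).} Take distinct vertices $v\neq v'$. For any admissible $u,u'$, write $vu=w1$ and $v'u'=w'1$. Since $v$ and $v'$ differ in one of their first $n$ letters, $w\neq w'$. Lemma~\ref{Split} (with $j=1$, $k=m$) then gives $\psi_{w1}(X)\cap\psi_{w'1}(X)=\emptyset$. Taking unions over $u$ and $u'$ shows $\psi_v(J_{n+1})\cap\psi_{v'}(J_{n+1})=\emptyset$. So $\mathcal N_{1,n+1}$ has no $1$-simplices.

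\textbf{Step 4 (counting).} A simplicial complex with no edges has $H_0$ and $H^0$ free abelian of rank equal to its number of vertices. Each piece is nonempty because $J_{n+1}\neq\emptyset$, so every vertex is genuinely present. The number of vertices is
\[\prod_{j=1}^n\#I_c^{(j)}=3^{\sum_{j=1}^n a_j(c)},\]
since $\#I_c^{(j)}=3$ if $a_j(c)=1$ and $\#I_c^{(j)}=1$ if $a_j(c)=0$. Taking logarithms gives the claimed formula for both homology and cohomology.

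The only delicate point is Step 3: checking that the length-$(m-1)$ prefixes $w,w'$ really differ, so that Lemma~\ref{Split} applies. This holds because $v$ and $v'$ already differ within their first $n$ letters and $n\le m-1$.
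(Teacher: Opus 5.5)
Your proof is correct and is essentially the paper's argument: pick $m>n$ with $I_c^{(m)}=\{1\}$, cover $J_{n+1}$ by the squares $\psi_u(X)$ whose words end in $1$, use Lemma~\ref{Split} to show that $\mathcal N_{1,n+1}$ has no $1$-simplices, and count the vertices. Your Step~3 is slightly more careful than the paper, which only records disjointness of $\psi_\omega\circ\psi_\mu\circ\psi_1(X)$ and $\psi_\tau\circ\psi_\mu\circ\psi_1(X)$ for a common $\mu$, whereas you explicitly handle pairs $u\neq u'$; like the paper, your Step~1 tacitly excludes $c=1$, which the paper's definition does not count as dyadic but whose expansion $\overline{1}$ contains no zeros.
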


\begin{proof}
Let $n\in \mathbb N.$ Since $c$ is not a dyadic rational, there exists $j_0> n$ such that $a_{j_0}(c)=0,$ which means $I^{(j_0)}_c=\{1\}.$ Take $\omega\neq \tau\in I^{(1)}\times \cdots \times I^{(n)}$ and let $K_n$ be the limit set of the NIFS $(\Psi_c^{(n+k)})_{k=1}^{\infty}.$ By Lemma~\ref{Split} for any $\mu\in I_c^{(n+1)}\times \cdots \times I_c^{(j_0-1)},$ we have 
\begin{equation}
\label{equ1}
\begin{split}\psi_{\omega}\circ \psi_{\mu}\circ \psi_1(X)\cap \psi_{\tau}\circ \psi_{\mu}\circ \psi_1(X)=\emptyset.
\end{split}
\end{equation}
By the definition of $K_n$, we have 
\begin{equation}
\label{equ2}
\begin{split}
K_n\subset \bigcup_{\mu\in I_c^{(n+1)}\times \cdots \times I_c^{(j_0-1)}}\psi_{\mu}\circ \psi_1(X).
\end{split}
\end{equation}
Combining (\ref{equ1}) and (\ref{equ2}), we have 
$\psi_{\omega}(K_n)\cap \psi_{\tau}(K_n)=\emptyset.$ 
Hence, there is no 1-simplex on  $\mathcal{N}_{1,n+1}$, which implies that \[{\rm rank}{H}_0(\mathcal N_{1, n+1})={\rm rank}{H}^0(\mathcal N_{1, n+1})=\# (I^{(1)}_c\times\cdots \times I^{(n)}_c).\]
By the definition of $I^{(j)}_c$ we have \[\log {\rm rank}{H}_0(\mathcal N_{1, n+1})=\log {\rm rank}H^0(\mathcal N_{1, n+1})=\sum_{j=1}^n a_j(c)\log 3 ,\] which completes the proof.
%
%
%
%
\end{proof}

\subsection{Proofs of main results}
By \cite[Theorem 1.9 (2)]{Sumi09}, we have 
$\check{H}_0(\mathrm{SG}) \cong  \check{H}^0(\mathrm{SG}) \cong \mathbb{Z}$,  
$\mathrm{rank} \check{H}_1(\mathrm{SG}) = \mathrm{rank} \check{H}^1(\mathrm{SG}) = \infty$,
and  $ \check{H}_q(\mathrm{SG}) = \check{H}^q(\mathrm{SG})= 0$ for all $q \geq 2$. Moreover, the growth rate of the rank of the first (co)homology of $\mathrm{SG}$ is equal to $\log 3$. 

If $c$ is a non-dyadic rational, by Lemma~\ref{Split} there exists a strictly increasing sequence $(n_i)_{i=1}^{\infty}$ such that ${H}_1(\mathcal N_{1, n_i+1})  = {H}^1(\mathcal N_{1, n_i+1}) =0.$ Combining this with Theorem~\ref{th:cechsumi} yields $\check{H}_1(J_c) = \check{H}^1(J_c)=0$
and Proposition~\ref{Dissconne} implies Main Theorem A (b). 
Proposition~\ref{DyaConnNum} implies Main Theorem A (a) and Main Theorem B~(a). 
Finally, by Proposition~\ref{NotDyaConnNum} we obtain Main Theorem B (b), which completes the proof.

\section{The general dimensional setting}
Let $d\ge 3$. As in \cite{Nak22}, we introduce the $d$-dimensional Sierpi\'nski gasket, where the $3$-dimensional Sierpi\'nski gasket coincides with the Sierpi\'nski tetrahedron.
\begin{df}
\label{dsier}
Let $v_0=(0,0,...,0)\in \mathbb{R}^d$. Let $v_i=v_0+\sum_{j=d-i+1}^de_j\in \mathbb{R}^d$ for each $i\in \{1, 2,..., d\}$, where $e_j$ is the $j$th element of the natural basis of $\mathbb{R}^d$. Let $V_d=\{v_0, v_1,..., v_{d}\},$ i.e.

\begin{align*}
V_d:=\bigl\{&(0, 0, 0,..., 0, 0, 0),(0, 0, 0,..., 0, 0, 1),(0, 0, 0,..., 0, 1, 1),\cdots,\\&(0, 0, 1,..., 1, 1, 1),(0, 1, 1,..., 1, 1, 1),(1, 1, 1,..., 1, 1, 1)\}\bigr .
\end{align*} 
For each $v \in V_d$, define a contracting map $g_v: [0, 1]^d\rightarrow [0, 1]^d$ by $g_v(x)=(x+v)/2$. Then the set of $d+1$ maps $\{g_v\}_{v\in V_d}$ constructs an IFS. Then there uniquely exists a non-empty compact subset $J^d$ such that 

\begin{align*}
J^d=\bigcup_{v\in V_d}g_v(J^d),
\end{align*}
which is called the $d$-dimensional Sierpi\'nski gasket.
\end{df}
Let $x=(x_1,x_2,...,x_d)\in \mathbb{R}^d$. We set $p_d(x)=x_d$. Let $c\in \mathbb{R}$ and consider the following level set $J^d_{c}$ of $J^d$. 

\begin{align*}
J^d_{c}=\{x\in J^d\colon p_d(x)=c\}.
\end{align*}
Arguing as in the case $d=3$, we obtain the following results analogous to Main Theorems~A and~B.
\begin{thm}
\label{thm1}
Let $c\in [0, 1].$ Then the following holds.
\begin{itemize}
\item[(a)]If $c$ is a dyadic rational, then $J^d_c$ is a finite disjoint union of copies of the $(d-1)$-dimensional Sierpi\'nski gasket, $0<{\rm rank}\check{H}_0(J^d_c)<\infty,\ {\rm rank}\check{H}_1(J^d_c)=\infty,$ and $\check{H}_q(J^d_c)=0$ for all $q\geq 2.$
\item[(b)]If $c$ is a non-dyadic rational, then $J^d_c$ is totally disconnected and $\check{H}_q(J^d_c)=0$ for all $q\geq 1.$
\end{itemize}

\end{thm}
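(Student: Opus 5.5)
We follow the $d=3$ argument, with $\{0,1,2,3\}$ replaced by $\{0,1,\dots,d\}$ and the vertices ordered so that $p_d(v_0)=0$ and $p_d(v_i)=1$ for $i\ge 1$. For $c\in[0,1]$ with binary expansion $(a_j(c))$ as in Definition~\ref{bi}, we set $A_c^{(j)}=\{0\}$ if $a_j(c)=0$ and $A_c^{(j)}=\{1,\dots,d\}$ if $a_j(c)=1$. We then consider the NIFS $\Phi^d_c=(\{g_{v_i}\}_{i\in A_c^{(j)}})_{j}$.

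The proof of Lemma~\ref{coding} goes through verbatim and gives $\Pi(\omega)=\sum_j v_{\omega_j}/2^j$. Since $p_d(v_{\omega_j})\in\{0,1\}$, the proof of Lemma~\ref{SliceNIFS} applies unchanged. If $c$ is non-dyadic, $J^d_c$ is the limit set of $\Phi^d_c$. If $c=a_1\cdots a_n0\overline{1}$, then $J^d_c=\bigcup g_{\omega_1\cdots\omega_{n+1}}(\mathrm{SG}^{d-1})$ over $\omega\in\prod_{j\le n}A_c^{(j)}\times\{0\}$. Here $\mathrm{SG}^{d-1}$ is the limit set of $\{g_{v_1},\dots,g_{v_d}\}$, which lies in the hyperplane $x_d=1$. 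The point-orbits coming from the alternative expansion $a_1\cdots a_n1\overline{0}$ are absorbed exactly as before, using $g_{v_i}(0)=g_{v_0}(v_i)$ with $v_i\in \mathrm{SG}^{d-1}$.

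Next we project with $P(x_1,\dots,x_d)=(x_1,\dots,x_{d-1})$. The points $b_i=P(v_i)$, $i=1,\dots,d$, are the vertices $0,\,e_{d-1},\,e_{d-1}+e_{d-2},\dots,(1,\dots,1)$ of a $(d-1)$-simplex. Also $b_1=0$, so $\psi_1$ maps $X=[0,1]^{d-1}$ onto the lower-left corner subcube $[0,1/2]^{d-1}$. As in Lemma~\ref{Proj}, $P|_{J^d_c}$ is a homeomorphism onto the limit set of $\Psi^d_c$; it is injective because $J^d_c$ lies in a hyperplane. The analogue of Lemma~\ref{Split} holds because distinct words $\omega\ne\tau$ of equal length give distinct dyadic subcubes $\psi_\omega(X),\psi_\tau(X)$ with disjoint interiors, so their corner subcubes $\psi_{\omega1}(X)$ and $\psi_{\tau1}(X)$ are disjoint.

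With these analogues in place, Proposition~\ref{Dissconne}, Lemma~\ref{ConnNum}, Proposition~\ref{DyaConnNum} and Proposition~\ref{NotDyaConnNum} go through word for word, with $3$ replaced by $d$. For (b), $J^d_c$ is totally disconnected. Infinitely many $n$ have $a_n(c)=0$, and for each such $n$ the nerve $\mathcal N_{1,n+1}$ has no $1$-simplices. Hence $H_q(\mathcal N_{1,n+1})=0$ for $q\ge1$ along a cofinal sequence, so the inverse limit vanishes. Theorem~\ref{th:cechsumi} then gives $\check H_q(J^d_c)=0$ for $q\ge 1$.

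For (a), $J^d_c$ is a disjoint union of $d^{\,n-\ell}$ similar copies of $\mathrm{SG}^{d-1}$. Disjoint finite unions of compact sets give direct sums of Čech homology, so it suffices to know $\check H_*(\mathrm{SG}^{d-1})$. This is the main point: it needs the homology of the $(d-1)$-dimensional gasket, namely $\check H_0=\mathbb Z$, $\operatorname{rank}\check H_1=\infty$, and $\check H_q=0$ for $q\ge2$. We plan to quote \cite[Theorem 1.9]{Sumi09}, which covers post-critically finite self-similar sets in which the pieces meet pairwise in single points. Failing that, we compute it directly from the nerves $\mathcal N_{1,k}$ of the IFS $\{g_{v_1},\dots,g_{v_d}\}$. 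Distinct level-$k$ cells meet in at most one point, and that point lies in exactly two cells. Hence the nerve is a graph, so $H_q=0$ for $q\ge2$, and $\check H_q=0$ for $q\ge 2$ by Theorem~\ref{th:cechsumi}. The graph is connected and has $d^k$ vertices and $\binom d2\frac{d^k-1}{d-1}$ edges. Its first Betti number therefore grows like $d^k/2\to\infty$. The bonding maps are surjective on $H_1$, since each level-$(k-1)$ cycle lifts through the refined graph. Consequently the inverse limit has infinite rank, and likewise for cohomology.
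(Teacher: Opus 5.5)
\textbf{Where you follow the paper, and what you add.} Apart from the endpoint issue below, your argument is the paper's own: for general $d$ the paper only says ``arguing as in the case $d=3$'', and your steps are exactly those:
\begin{itemize}
\item[(i)] the NIFS $\Phi^d_c$;
\item[(ii)] the projection $P$ with $P(v_0)=P(v_1)=0$;
\item[(iii)] the corner-subcube separation of Lemma~\ref{Split};
\item[(iv)] edgeless nerves $\mathcal N_{1,n+1}$ whenever $a_n(c)=0$;
\item[(v)] Theorem~\ref{th:cechsumi}.
\end{itemize}
Your direct nerve computation of $\check H_*(\mathrm{SG}^{d-1})$ is a genuine addition, since the paper only quotes Sumi for the planar gasket. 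It is correct: two distinct level-$k$ cells meet in at most one point, that point lies in exactly two cells, and the bonding maps collapse connected $K_d$-clusters, so they are onto on $H_1$. One small correction: the first Betti number is exactly $(d-2)(d^k-1)/2$, which is $\sim d^k/2$ only when $d=3$. This does not affect the conclusion.

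\textbf{The gap (shared with the paper): the endpoints.} Your part (a) assumes every dyadic $c$ has an expansion of the form $a_1\cdots a_n0\overline{1}$. Your part (b) assumes every non-dyadic $c$ has infinitely many digits $a_n(c)=0$. Both assumptions fail at the endpoints under the paper's definitions.
\begin{itemize}
\item[(i)] At $c=0$: we have $0=0/2^n$, so $0$ is dyadic, and $a_j(0)=0$ for all $j$. Since $v_0$ is the only vertex with $x_d=0$, $J^d_0=\{v_0\}$ is a single point. Hence $\check H_1(J^d_0)=0$, and (a) is false at $c=0$.
\item[(ii)] At $c=1$: the definition requires $k\le 2^n-1$, so $1$ is not dyadic. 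Its expansion $\overline{1}$ has no zeros, and $J^d_1=\mathrm{SG}^{d-1}$ is connected with ${\rm rank}\,\check H_1=\infty$. So (b) is false at $c=1$.
\end{itemize}
The statement must exclude $c=0$ and move $c=1$ into (a). There (a) holds trivially, because $J^d_1$ is a single copy of $\mathrm{SG}^{d-1}$. With that correction your proof is complete.
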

\begin{thm}
\label{thm2}
Let $c\in [0, 1].$ Then the following holds.
\begin{itemize}
\item[(a)]Let $c$ be a dyadic rational with its binary expansion $a_1\cdots a_n0\overline{1}$ and $\ell=\#\{j\in \{1,\ldots, n\}\colon a_j=0\}.$ Then we have ${\rm rank}{\check H}_0(J^d_c)=d^{n-\ell}$ and 
\[\lim_{n\to \infty}\frac{1}{n}{\log {\rm rank}{H}_1(\mathcal N_{1, n+1})}=\log d.\]
\item[(b)]If $c$ is a non-dyadic rational with its binary expansion $(a_j(c))_{j=1}^{\infty}$,  then for any $n\in \mathbb N$ we have \[\log {\rm rank}{H}_0(\mathcal N_{1, n+1})=\sum_{j=1}^n a_j(c)\log d.\]
\end{itemize}

The same holds for cohomology groups.

\end{thm}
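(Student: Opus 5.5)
We transplant the $d=3$ argument verbatim to dimension $d$; the only change is the bookkeeping of which vertices have last coordinate $0$ or $1$. Since $p_d(v_0)=0$ and $p_d(v_i)=1$ for $1\le i\le d$, set $A^{(j)}_c=\{0\}$ if $a_j(c)=0$ and $A^{(j)}_c=\{1,\dots,d\}$ if $a_j(c)=1$. Then consider the NIFS $\Phi^d_c=(\{g_{v_i}\}_{i\in A_c^{(j)}})_j$.

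\emph{Step 1: coding and limit set.} The computation of Lemma~\ref{coding} gives $\Pi(\omega)=\sum_j v_{\omega_j}/2^j$. The argument of Lemma~\ref{SliceNIFS} then shows that $J^d_c$ is the limit set of $\Phi^d_c$. For a dyadic $c=a_1\cdots a_n0\overline{1}$, the codings ending in $1\overline0$ collapse into those ending in $0\overline1$: $g_{v_i}(v_0)=g_{v_0}(v_i)$ and $v_i\in J^{d-1}$, the attractor of $\{g_{v_1},\dots,g_{v_d}\}$. Hence $J^d_c=\bigcup g_{\omega_1\cdots\omega_n0}(J^{d-1}_{\ast})$, a finite union of similar copies of that attractor.

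\emph{Step 2: projection.} Delete the last coordinate, $P(x_1,\dots,x_d)=(x_1,\dots,x_{d-1})$. Put $b_i=P(v_i)\in\{0,1\}^{d-1}$ for $1\le i\le d$, with $b_1=0$, and $\psi_i(x)=(x+b_i)/2$ on $X=[0,1]^{d-1}$. The affine simplex spanned by $b_1,\dots,b_d$ is non-degenerate, so $\{\psi_i\}$ generates the $(d-1)$-dimensional Sierpiński gasket. As in Lemma~\ref{Proj}, $P|_{J^d_c}$ is an isometry (translation by $-c e_d$) onto the limit set of $\Psi_c$, where $I^{(j)}_c=\{1\}$ or $\{1,\dots,d\}$. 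Lemma~\ref{Split} carries over unchanged: $\psi_{\omega1}(X)$ is the corner subcube $\psi_\omega([0,1/2]^{d-1})$ of the dyadic cube $\psi_\omega(X)$. Distinct words $\omega\ne\tau$ give distinct dyadic cubes, because the $b_i$ are distinct, so these corner subcubes are disjoint.

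\emph{Step 3: the counting and homology statements.} Proposition~\ref{Dissconne}, Lemma~\ref{ConnNum}, Proposition~\ref{DyaConnNum} and Proposition~\ref{NotDyaConnNum} go through word for word with $3$ replaced by $d=\#\{1,\dots,d\}$. This gives total disconnectedness and $\log\operatorname{rank}H_0(\mathcal N_{1,n+1})=\sum_{j\le n}a_j(c)\log d$ in the non-dyadic case. In the dyadic case it gives $d^{n-\ell}$ disjoint copies of $J^{d-1}$. For positive degrees in case (b), Lemma~\ref{Split} shows that $\mathcal N_{1,n_i+1}$ is $0$-dimensional along the subsequence $n_i$ with $I^{(n_i)}_c=\{1\}$. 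That subsequence is cofinal, so Theorem~\ref{th:cechsumi} gives $\check H_q=\check H^q=0$ for $q\ge1$. In case (a), Čech (co)homology is additive over finite disjoint unions of compacta. So everything reduces to the Čech homology of the $(d-1)$-dimensional gasket $\mathrm{SG}_{d-1}$ and the growth of $H_1$ of its nerves.

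\emph{Main obstacle.} The remaining input is the statement for $\mathrm{SG}_{d-1}$ with $d-1\ge2$: $\check H_0\cong\mathbb Z$, $\operatorname{rank}\check H_1=\infty$, $\check H_q=0$ for $q\ge2$, and growth rate $\log d$ of $H_1$ of the level-$k$ nerve. For $d=3$ this was quoted from \cite[Theorem 1.9 (2)]{Sumi09}; I would check that Sumi's theorem covers the higher-dimensional gaskets. If it does not, I would argue directly. Copies $\psi_\omega(\mathrm{SG}_{d-1})$ for $|\omega|=k$ meet only at single vertices, and at most two at a time, since the simplices are non-degenerate. So the nerve $\mathcal N_{1,k+1}$ is a graph with $d^k$ vertices. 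Its edges correspond to the gluing points: at each level $j<k$ and for each of the $d^{j-1}$ parent words, one point per pair of children, giving $\binom d2 d^{j-1}$ points. The graph is connected, so its Euler characteristic yields
\[\operatorname{rank}H_1=\binom d2\frac{d^k-1}{d-1}-d^k+1 .\]
This is of order $d^k$ because $\binom d2/(d-1)=d/2>1$ for $d\ge3$, so the growth rate is $\log d$. Because the nerves are graphs, higher homology vanishes. Each bonding map is surjective on $H_1$, since every cycle lifts. Hence the inverse limit of $H_1$ has infinite rank, while $H_0$ stabilizes at $\mathbb Z$. For cohomology, the bonding maps are injective on $H^1$, so the direct limit also has infinite rank.
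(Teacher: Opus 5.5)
Your proposal is correct and follows the paper's own route. The paper proves this theorem only by ``arguing as in the case $d=3$'', which is exactly your transplant of the coding map, the projection, the corner-cube disjointness of Lemma~\ref{Split}, and the counting of Propositions~\ref{DyaConnNum} and~\ref{NotDyaConnNum} with $3$ replaced by $d$.

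You go beyond the paper only in the input about the $(d-1)$-dimensional gasket. The paper takes this from \cite[Theorem 1.9 (2)]{Sumi09} for $d=3$ and leaves it implicit for $d\ge 4$. Your direct computation supplies it correctly: the nerves are connected graphs with $\operatorname{rank}H_1=(\tfrac d2-1)(d^k-1)$, and the bonding maps are surjective on $H_1$ and injective on $H^1$. One small slip: to obtain the edge count $\binom d2\frac{d^k-1}{d-1}$ you use, the junction points must be counted over all levels $1\le j\le k$, not $j<k$.
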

\begin{rem}
By the Alexander duality theorem \cite[Theorem 6.2.16]{Spa}, 
the topology of a subset $D \subset \mathbb{R}^d$ determines that of its complement.
More precisely, 
the \v{C}ech cohomology group $\check{H}^{d-q-1}(D)$ is isomorphic to the reduced homology group $\tilde{H}_{q}(\RR^d\setminus D)$ of the complement. 
In our setting, we deduce the following results from Main Theorems~A and~B.
\begin{thm}
\label{thm3}
Let $c\in [0, 1].$ Then the following holds.
\begin{itemize}
\item[(a)]Let $c$ be a dyadic rational with its binary expansion $a_1\cdots a_n0\overline{1}$ for some $n\in \mathbb N$ and $\ell=\#\{j\in \{1,\ldots, n\}\colon a_j=0\}.$ Then ${\rm rank}\tilde{H}_{d-1}(\mathbb R^d\setminus J_c)=d^{n-\ell},\ {\rm rank}{\tilde H}_{d-2}(\mathbb R^d\setminus J_c)=\infty$, and $\tilde{H}_q(\mathbb R^d\setminus J_c)=0$ for all $q\notin \{d-1, d-2\}.$
\item[(b)]If $c$ is a non-dyadic rational, then ${\rm rank}\tilde{H}_{d-1}(\mathbb R^d\setminus J_c)=\infty$ and ${\rm rank}\tilde{H}_q(\mathbb R^d\setminus J_c)=0$ for all $q\neq d-1.$
\end{itemize}

\end{thm}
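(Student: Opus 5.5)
The plan is to read off Theorem~\ref{thm3} from the cohomological halves of Theorems~\ref{thm1} and~\ref{thm2} through Alexander duality. Here $J_c$ stands for the slice $J^d_c$, a nonempty compact subset of $\mathbb R^d$ (it lies in the hyperplane $\{x_d=c\}$). We use duality in the form of \cite[Theorem 6.2.16]{Spa}, as recalled in the remark above: for compact $D\subset\mathbb R^d$,
\[\tilde H_q(\mathbb R^d\setminus D)\cong \check H^{d-q-1}(D)\qquad\text{for all } q.\]
The right-hand side is \emph{unreduced} \v{C}ech cohomology and is zero when $d-q-1<0$.

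I will first check this normalization in degree $q=d-1$, since it is the only place the reduced/unreduced distinction matters. Starting from the sphere version $\tilde H_q(S^d\setminus D)\cong\tilde{\check H}{}^{d-q-1}(D)$, a Mayer--Vietoris argument for $S^d\setminus D=(\mathbb R^d\setminus D)\cup(\text{small ball about }\infty)$ adds one free summand in degree $d-1$. That summand exactly compensates for passing from reduced to unreduced $\check H^0$. So the formula above is correct with unreduced $\check H^0(D)$, and every other degree is unaffected.

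For part (a), let $c$ be dyadic with expansion $a_1\cdots a_n0\overline{1}$. Theorems~\ref{thm1}(a) and~\ref{thm2}(a), in their cohomological forms, give three facts: $\check H^0(J_c)\cong\mathbb Z^{d^{n-\ell}}$, $\mathrm{rank}\,\check H^1(J_c)=\infty$, and $\check H^p(J_c)=0$ for $p\ge 2$. Substituting $p=d-q-1$ gives the three claims in turn. Taking $q=d-1$ gives $\mathrm{rank}\,\tilde H_{d-1}(\mathbb R^d\setminus J_c)=d^{n-\ell}$. Taking $q=d-2$ gives infinite rank. Every $q\le d-3$ has $p\ge 2$ and so gives zero, and every $q\ge d$ has $p<0$ and so gives zero.

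For part (b), let $c$ be non-dyadic. Theorem~\ref{thm1}(b) gives $\check H^p(J_c)=0$ for all $p\ge1$, so $\tilde H_q(\mathbb R^d\setminus J_c)=0$ for all $q\ne d-1$. It remains to show $\mathrm{rank}\,\check H^0(J_c)=\infty$, which I expect to be the only step needing an argument beyond substitution. I would handle it with clopen partitions, as follows.
\begin{itemize}
\item Since $c$ is non-dyadic, infinitely many $a_j(c)$ equal $1$, so by Theorem~\ref{thm2}(b) the rank of $H^0(\mathcal N_{1,n+1})$ equals $d^{\sum_{j\le n}a_j(c)}$, which tends to $\infty$.
\item In the proof of Proposition~\ref{NotDyaConnNum} (and its $d$-dimensional analogue), the pieces $\psi_\omega(K_n)$ are pairwise disjoint compact sets. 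Hence they form a partition of $J_c$ into $d^{\sum_{j\le n}a_j(c)}$ nonempty clopen sets.
\item Their indicator functions are linearly independent locally constant $\mathbb Z$-valued functions. Since $\check H^0(J_c)$ is the group of locally constant $\mathbb Z$-valued functions on $J_c$, it contains free subgroups of arbitrarily large rank.
\end{itemize}
Therefore $\mathrm{rank}\,\check H^0(J_c)=\infty$, and duality gives $\mathrm{rank}\,\tilde H_{d-1}(\mathbb R^d\setminus J_c)=\infty$. This completes the plan.
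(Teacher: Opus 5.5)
Your proposal is correct and follows the paper's route. You apply Alexander duality in the form $\tilde H_q(\mathbb R^d\setminus D)\cong\check H^{d-q-1}(D)$, with unreduced cohomology on the right, and then substitute the cohomological versions of Theorems~\ref{thm1} and~\ref{thm2}. Your Mayer--Vietoris check of the normalization in degree $d-1$ and your clopen-partition argument that $\mathrm{rank}\,\check H^0(J_c)=\infty$ in (b) make explicit what the paper's one-line deduction leaves implicit.

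One caveat applies equally to the paper. The disjointness of the pieces $\psi_\omega(K_n)$ (and Theorems~\ref{thm1}(b) and~\ref{thm2}(b)) requires infinitely many $a_j(c)=0$. This fails for $c=1$, which the paper's definition counts as non-dyadic, yet $J^d_1$ is a connected $(d-1)$-dimensional gasket. So part (b) should be restricted to non-dyadic $c<1$.
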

\end{rem}
\subsection*{Acknowledgments} 
YN is partially supported by  JSPS KAKENHI JP25K17282. TW is partially supported by  JSPS KAKENHI
(JP23K13000, JP24K00526, JP25K00011) 
and by JST AIP Accelerated Program JPMJCR25U6.

\end{document}